\documentclass[11pt]{amsart}
\usepackage[all]{xy}
\usepackage{mathrsfs}

\usepackage[margin=1in]{geometry}
\usepackage{graphicx}
\usepackage{aeguill}
\usepackage[french,english]{babel}
\usepackage{amsmath,amsfonts,amssymb, lineno}
\usepackage{color}
\usepackage[utf8]{inputenc}
\usepackage{comment}
\usepackage[shortlabels]{enumitem}
\usepackage{etoolbox}
\usepackage{float}
\usepackage{latexsym}
\usepackage{lipsum}
\usepackage{needspace}
\usepackage{tikz}
\usepackage[colorlinks=true,linkcolor=blue,citecolor=purple,urlcolor=blue,backref=page,pagebackref=true]{hyperref}

\newtheorem{theorem}{Theorem}
\numberwithin{theorem}{section}
\numberwithin{equation}{section}

\newtheorem{lemma}[theorem]{Lemma}

\newtheorem{proposition}[theorem]{Proposition}
\newtheorem{corollary}[theorem]{Corollary}
\theoremstyle{definition}
\newtheorem{definition}[theorem]{Definition}
\theoremstyle{remark}
\newtheorem{remark}[theorem]{Remark}
\newtheorem{example}[theorem]{Example}

\newcommand{\Z}{\mathbb{Z}}
\newcommand{\R}{\mathbb{R}}
\newcommand{\N}{\mathbb{N}}

\newcommand{\conv}{\operatorname{conv}}
\newcommand{\aff}{\operatorname{aff}}
\newcommand{\dist}{\operatorname{dist}}
\newcommand{\vol}{\operatorname{vol}}

\begin{document}

\title{Lattice point sumsets and asymptotic approximate groups}
\author{Arindam Biswas}
\address{Polynom Research, Paris, France}
\email{arin.math@gmail.com}
\author{Pavlo Yatsyna}
\address{Charles University \\ Faculty of Mathematics and Physics \\ Department of Algebra \\ Sokolovsk\'{a} 83 \\ 186 75 Praha 8 \\ Czech Republic}
\email{p.yatsyna@matfyz.cuni.cz}
\date{\today}
\subjclass[2020]{Primary 11B13; Secondary 11B34, 11B75, 20F69, 11P70}
\keywords{approximate groups, asymptotic approximate groups, growth of metric balls in groups, covering number, simplices, additive combinatorics}

\vspace{1em}  

\vspace{1em}

\begin{abstract}
We establish new quantitative bounds for asymptotic approximate groups arising from finite subsets of lattices and, more generally, semi-linear subsets of abelian groups. Our approach combines Khovanskii's theorem on sumsets with Rogers and Zong bounds for the covering numbers.
\end{abstract}

\maketitle

\section{Introduction}

Let $G$ be a group. For non-empty subsets $A_1,\ldots,A_m\subseteq G$, we write
\[
A_1\cdot A_2\cdots  A_m:=\left\{a_1\cdot a_2\cdots  a_m:a_i\in A_i\text{ for }1\le i\le m\right\}
\]
in particular, for a subset $A\subseteq G$, the \textit{$h$-fold} product set is denoted by
\[
A^h:=\left\{a_1\cdot\cdots\cdot a_h:a_1,\ldots,a_h\in A\right\}.
\]

\textit{Approximate groups} provide a way of formalising the idea that a finite set behaves like a group up to a bounded error under multiplication. The formal definition of an approximate group was introduced by Tao in \cite{Tao08} and a part of it was motivated by its use in the work of Bourgain-Gamburd \cite{BG08} on super-strong approximation for Zariski-dense subgroups of $\mathrm{SL}_2(\mathbb{Z})$. Tao's definition of a $K$-approximate group requires a finite set to be symmetric, to contain the identity, and to have its square covered by at most $K$ left-translates of itself.

\begin{definition}
\label{AppTao}
Let $G$ be a group, and let $K\ge 1$. A finite set $A\subseteq G$ is called a \textit{$K$-approximate group} if
\begin{enumerate}
\item $e\in A$, where $e$ is the identity element of $G$;
\item $A$ is symmetric, that is, $a^{-1}\in A$ whenever $a\in A$;
\item there exists a symmetric set $X\subseteq A\cdot A$ with $|X|\le K$ such that
\[
A\cdot A\subseteq X\cdot A.
\]
\end{enumerate}
\end{definition}

This definition is now a standard object in additive combinatorics and has deep connections with growth in groups, beginning with the influence of Freiman-type inverse problems and including applications to expansion and approximate subgroup structure. See \cite{BGT11, Breuillard14}. Nathanson introduced a broader notion in which no symmetry, finiteness, or identity condition is imposed on the set being studied \cite{Nathanson2018}.

\begin{definition}
\label{def:appgr}
Let $r,\ell\in\N$ and $G$ be a group. A non-empty subset $A\subseteq G$ is an \textit{$(r,\ell)$-approximate group} if there exists a set $X\subseteq G$ such that
\[
|X|\le \ell
\qquad\text{and}\qquad
A^r\subseteq X\cdot A.
\]
\end{definition}

Thus, every non-empty subset is trivially a $(1,1)$-approximate group. Moreover, every $K$-approximate group in the sense of Definition~\ref{AppTao} is a $(2,K)$-approximate group in the sense of Definition~\ref{def:appgr}. One interest of Definition~\ref{def:appgr} is not in a single product set alone, but in the behaviour of large powers of a fixed set \cite{Nathanson2018}.

\begin{definition}
\label{def:aag}
Let $r,\ell\in\N$. A subset $A\subseteq G$ is an \textit{asymptotic $(r,\ell)$-approximate group} if there exists $h_0\in\N$ such that, for each $h\ge h_0$, $A^h$ is an $(r,\ell)$ approximate group. That is, there exists $X\subseteq G$ (depending on $h$) such that $|X|\le \ell$ and $A^{rh}\subseteq X\cdot A^h$.
\end{definition}

In the case when $G$ is an abelian group, we replace multiplication by addition, and work with sumsets instead, i.e. $A^r\subseteq X\cdot A$ becomes $rA\subseteq X+A$, where $A+X=\{a+x: a\in A,\,x\in X\}$.

Nathanson proved that there exist non-abelian groups that contain a subset that is not an asymptotic approximate group for all $r\ge 2$ and $\ell \ge 1$ \cite[Theorem~1]{Nathanson2018}. Complementarily, he proved that every non-empty finite subset of an abelian group is an asymptotic approximate group.

\begin{theorem}[Nathanson \cite{Nathanson2018}]
\label{ThmNathanson}
Let $r,k\in\N$, and let $A$ be a finite subset of cardinality $k$ in an abelian group. Then $A$ is an asymptotic $(r,\ell)$-approximate group for some $\ell\in\N$.
\end{theorem}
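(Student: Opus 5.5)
We reduce to the integer lattice via a homomorphism. Write $A=\{a_1,\ldots,a_k\}$ and consider $\phi:\Z^k\to G$, $\phi(x)=\sum x_ia_i$. Then $hA=\phi(h\Delta)$, where
\[
\Delta=\{e_1,\ldots,e_k\}\subseteq\Z^k
\]
is the set of standard basis vectors. Thus $h\Delta$ is the set of lattice points in the dilated simplex $hS$, with $S=\{x\in\R_{\ge0}^k:\sum x_i=1\}$. Since $\phi$ is a homomorphism, any covering $rh\Delta\subseteq X+h\Delta$ pushes forward to $rhA\subseteq\phi(X)+hA$ with $|\phi(X)|\le|X|$. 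It therefore suffices to prove the statement for $\Delta$ in $\Z^k$, with $\ell$ depending only on $r$ and $k$. This gives a bound uniform in $A$ of size $k$.

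Now fix $h\ge1$ and let $y\in rh\Delta$, so $y\in\Z_{\ge0}^k$ with $\sum y_i=rh$. We want a bounded set $X$ of translates $x$ with $\sum x_i=(r-1)h$ such that $y-x\in\Z_{\ge0}^k$. The natural choice is a coarse grid. Let $m=\lfloor (r-1)h/k\rfloor$ and take $X$ to be the set of vectors $x=m\,c$ with $c\in\Z_{\ge0}^k$ and $\sum c_i$ fixed, plus a fixed correction vector to make the coordinate sum exactly $(r-1)h$. Given $y$, we choose $c_i=\lfloor y_i/m\rfloor$, suitably reduced so the total is right. Then $y-mc$ has nonnegative coordinates, because we only ever subtract at most $y_i$ in each coordinate.

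To get the sum exactly right, we proceed greedily. Start from $x=0$ and, coordinate by coordinate, set $x_i=\min(y_i,\,(r-1)h-\sum_{j<i}x_j)$. This gives $0\le x\le y$ with $\sum x=(r-1)h$, hence $y-x\in h\Delta$. The issue is that this $x$ is not drawn from a bounded set, so we round it. Replace $x$ by the vector $x'$ whose coordinates are the multiples of $m$ just below $x_i$, for all but one designated coordinate. Put the deficit (at most $(k-1)m\le (r-1)h$) into one coordinate $j$ where $y_j$ is large enough to absorb it. Such a $j$ exists by pigeonhole after allowing $m$ to be a small constant fraction of $h$, say $m=\lfloor h/k^2\rfloor$. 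The vectors $x'$ obtained this way are determined by the quotients $x'_i/m\le rh/m=O(rk^2)$ and the choice of $j$, so
\[
|X|\le k\,(O(rk^2))^{k}.
\]
This bound is independent of $h$ once $h\ge h_0=k^2$. For smaller $h$ there is nothing to prove, since the definition only requires all $h\ge h_0$.

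The main obstacle is the bookkeeping in the rounding step: checking that the designated coordinate can absorb the rounding deficit while keeping $y-x'\ge0$ and $\sum x'_i=(r-1)h$. I would handle it by choosing $j$ to maximize $y_j-x_j$, or $y_j$ itself, and using $\sum(y_i-x_i)=h$ to show $y_j-x'_j\ge h/k-(k-1)m\ge0$.
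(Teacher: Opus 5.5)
Your argument is correct, and it takes a genuinely different route from the paper. The paper gives no separate proof of this theorem. It recovers it, in a stronger form, from Theorem~\ref{thm:finite-abelian}, whose proof runs as follows:
\begin{enumerate}
\item quotient $\langle A-a_0\rangle$ by its torsion subgroup $T$ and realise the image as a lattice configuration $\overline{A}$ of affine dimension $d$;
\item use Khovanskii's theorem to ensure that lattice points far from the boundary of $h\conv(\overline{A})$ lie in $h\overline{A}$;
\item cover $rh\conv(\overline{A})$ by translates of a slightly shrunken copy of $h\conv(\overline{A})$ using the Rogers--Zong bound;
\item round the translates into the lattice;
\item lift back through the torsion, paying a factor $|T|$ (with Nathanson's finite-group lemma when $d=0$).
\end{enumerate}

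You instead push everything through the universal homomorphism $\phi:\Z^k\to G$. This absorbs torsion and all relations among the $a_i$ at once, and reduces every $k$-element set to the single configuration $\Delta$. Since $h\Delta$ is exactly the set of lattice points of $hS$, no Khovanskii-type boundary correction is needed, and your grid-rounding argument does the covering by hand.

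What your route buys is a fully elementary proof with an explicit, $A$-independent threshold $h_0=k^2$ and a torsion-free bound $\ell\le k\bigl(2(r-1)k^2+1\bigr)^{k-1}$. Its cost is dependence on $k$: your bound is of the same $k^{O(k)}$ type as the Biswas--Moens bound $(4rk)^k$. The paper's route gives a bound depending only on $r$ and the affine dimension $d$, but it carries the factor $|T|$ and an $A$-dependent threshold coming from $f_A$ and the inradius.

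The two ideas also combine. Apply Theorem~\ref{thm:main} to $\Delta$, whose affine dimension is $k-1$, and push forward by $\phi$. This gives $\ell\le(1+\delta)C(k-1,r)\vartheta(k-1)$ for every $k$-element subset of every abelian group, with no torsion factor.

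For the write-up, a few fixes are needed.
\begin{enumerate}
\item Discard your first attempt, with $m=\lfloor (r-1)h/k\rfloor$ and a fixed correction vector. A fixed correction can destroy $y-x\ge 0$.
\item Discard the option of choosing $j$ to maximise $y_j$. The greedy $x$ may have $x_j=y_j$ at the largest coordinate, leaving no room for the rounding deficit. With $j$ maximising $y_j-x_j$ you get $y_j-x'_j\ge h/k-(k-1)(m-1)>0$, because $m\le h/k^2$.
\item State explicitly that $X$ is the fixed set of vectors $x'$ with $x'_i=mq_i$ for $i\ne j$ and $x'_j=(r-1)h-m\sum_{i\ne j}q_i$. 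Here $j$ ranges over $\{1,\ldots,k\}$ and each $q_i$ over $\{0,\ldots,\lfloor (r-1)h/m\rfloor\}$.
\item Note that $m\ge h/(2k^2)$ for $h\ge k^2$; this is what makes $|X|$ independent of $h$.
\end{enumerate}
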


More quantitatively, Nathanson \cite[Theorem~5]{Nathanson2018} obtained a bound of the form
\[
\ell\le n_0 k b(r,k),
\qquad
b(r,k):=\binom{(r+1)(k-1)-1}{k-1},
\]
where $n_0$ is the cardinality of the torsion subgroup of $\langle A\rangle$, the group generated by $A$. In the special one-dimensional case, Nathanson had earlier proved the sharper result that every finite set of integers is an asymptotic $(r,r+1)$-approximate group \cite{Nathanson2016}. Biswas and Moens \cite[Theorem 1.6]{BiswasMoens2022102330} later gave a different proof in the general abelian setting, improving the bound to $\ell\le (4rk)^k.$ More recently, a chromatic version of the theory was developed for tuples of subsets of abelian groups \cite{Biswas2026}.

Our main result is toward a quantitative improvement of the above constant for sufficiently large $k$ for sumsets of lattice points. Specifically:

\begin{theorem}\label{thm:main} 
Let $A\subseteq \Z^n$ be a non-empty finite set with affine span of dimension $d$. For any integer $r\ge 1$ and any real $\delta>0$, $A$ is an asymptotic $(r,\ell)$-approximate group with $\ell = 1$ if $d=0$ and otherwise \[\ell \le (1+\delta)C(d,r)\vartheta(d),
\]
where $\vartheta(d)$ is the translative covering density in $\R^d$, which satisfies $\vartheta(1)=1$, $\vartheta(2)\le \frac{3}{2}$, and $\vartheta(d)\le d\log d +d\log\log d+5d$ for $d\ge 3$. The constant $C(d,r)$ is given by:
    \[
    C(d,r) = \begin{cases}
    1+r & \text{if } d = 1,\\
    1+4r+r^2 & \text{if } d = 2,\\
    1+9r+9r^2+r^3 & \text{if } d = 3,\\
    (1+\sqrt{r})^{2d}\sqrt{\frac{\pi d}{2}}e^{1/12} & \text{if } d \ge 4.
    \end{cases}
    \]
    If $\conv(A)$ is centrally symmetric, we can replace $C(d,r)$ with $(1+r)^d$ for all $d\ge 1$.
\end{theorem}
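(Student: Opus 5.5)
The plan is to reduce the problem to a continuous covering problem for the polytope $P=\conv(A)$. I would use the structure of large sumsets (Khovanskii's theorem in its geometric form) to handle the lattice points, and then invoke the Rogers--Zong covering bound. The case $d=0$ is trivial: $A=\{a\}$ gives $rhA=\{rha\}=\{(r-1)ha\}+hA$, so $\ell=1$. For $d\ge1$, translate so that $a_0\in A$ becomes $0$, and let $\Lambda$ be the lattice generated by $A-a_0$. After a linear change of coordinates, $\Lambda=\Z^d\subseteq\R^d$ and $P=\conv(A-a_0)$ is a $d$-dimensional lattice polytope.

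Since $hA=ha_0+h(A-a_0)$ and $rhA=rha_0+rh(A-a_0)$, it suffices to cover $rh(A-a_0)$ by translates of $h(A-a_0)$. The translation vectors are taken in $\Lambda$, and the coset shift $(r-1)ha_0$ is then added to each. So from now on I assume $a_0=0$ and $\Lambda=\Z^d$.

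The first key step is a uniform interior description of $hA$. I would show that there is a constant $c=c(A)$ and an $h_1$ such that, for all $h\ge h_1$, every point of $\Z^d\cap hP$ at Euclidean distance at least $c$ from $\partial(hP)$ lies in $hA$. This follows from Khovanskii-type results on the structure of $hA$: for large $h$, $hA$ agrees with $hP\cap\Z^d$ except within bounded neighbourhoods of the cones at the vertices. Trivially, $hA\subseteq hP$ as well.

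Next comes the continuous covering step. Fix $\varepsilon>0$ small in terms of $\delta$. By the Rogers--Zong bound $N(K,L)\le \frac{\vol(K-L)}{\vol(L)}\,\vartheta(L)$, applied with $K=rP$ and $L=(1-\varepsilon)P$, the body $rP$ is covered by $N$ translates $t_i+(1-\varepsilon)P$ with
\[
N\le \frac{\vol(rP-(1-\varepsilon)P)}{(1-\varepsilon)^d\vol(P)}\,\vartheta(d)\le (1+\delta)\frac{\vol(rP-P)}{\vol(P)}\,\vartheta(d)
\]
for $\varepsilon$ small. I then need $\vol(rP-P)\le C(d,r)\vol(P)$, which is where the four cases come from. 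Expanding in mixed volumes and using the Rogers--Shephard-type inequality $V(P[d-i],-P[i])\le\binom{d}{i}\vol(P)$ gives $\vol(rP-P)\le\sum_i\binom{d}{i}^2r^i\vol(P)$. For $d=1,2,3$ this is exactly the stated polynomial. For $d\ge4$, I would bound the sum by $\max_i\binom{d}{i}$ times $\sum_i\binom{d}{i}r^{i}$-type terms, or more precisely use $\sum_i\binom{d}{i}^2 r^i\le\big(\sum_i\binom{d}{i}r^{i/2}\big)^2\cdot(\text{Stirling factor})$. The factor $\sqrt{\pi d/2}\,e^{1/12}$ should come from a Stirling estimate of the central binomial coefficient against $2^{d}/\sqrt{d}$. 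If $P=-P+v$, then $rP-P$ is a translate of $(r+1)P$ and the ratio is exactly $(1+r)^d$. The translative covering density bounds for $\vartheta(d)$ are quoted from the literature.

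Finally, scale the covering by $h$: the sets $ht_i+h(1-\varepsilon)P$ cover $rhP\supseteq rhA$. Round each $ht_i$ to a nearest lattice point $x_i\in\Z^d$, which moves it by at most $\sqrt d/2$. Take $h\ge h_0$ large enough that $\varepsilon h\cdot\rho(P)$ exceeds $c+\sqrt d$, where $\rho(P)$ is the inradius-type constant for $P$ relative to its interior point. Then every lattice point $z\in rhA$ satisfies $z-x_i\in\Z^d\cap hP$ at distance at least $c$ from $\partial(hP)$, so $z-x_i\in hA$. This gives $rhA\subseteq X+hA$ with $|X|\le N$.

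I expect the main obstacle to be the first step: obtaining the interior structure statement with a distance-to-boundary constant $c$ uniform in $h$, in a form compatible with shrinking by the factor $(1-\varepsilon)$. The first thing I would try is to derive it from the cone decomposition in Khovanskii's theorem, which handles neighbourhoods of each vertex, together with the fact that deep interior points of $hP$ are far from all vertex cones.
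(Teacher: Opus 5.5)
Your overall architecture matches the paper's: Khovanskii's interior statement, the Rogers--Zong bound $N(K,L)\le\frac{\vol(K-L)}{\vol(L)}\theta(L)$, rounding the translation vectors to $\Z^d$, and absorbing the rounding error in the boundary buffer. The cases $d\le 3$ and the centrally symmetric case go through as you describe.

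The gap is the mixed-volume step for $d\ge 4$. The inequality $V(P[i],(-P)[d-i])\le\binom{d}{i}\vol(P)$, which you call ``Rogers--Shephard-type'', is Godbersen's conjecture. It is known for $i\in\{0,1,d-1,d\}$, which is exactly why $d\le 3$ works, but it is open in general for $2\le i\le d-2$. So your bound $\vol(rP-P)\le\sum_i\binom{d}{i}^2r^i\,\vol(P)$ is unjustified once $d\ge 4$.

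There is a warning sign in the statement itself. If that coefficientwise inequality were available you would need no Stirling factor at all, since $\sum_i\binom{d}{i}^2r^i\le\bigl(\sum_i\binom{d}{i}r^{i/2}\bigr)^2=(1+\sqrt r)^{2d}$ directly. In fact you would get $\sum_i\binom{d}{i}^2r^i$ in every dimension. Your account of $\sqrt{\pi d/2}\,e^{1/12}$ as a central-binomial estimate is not where that factor actually comes from.

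The missing ingredient is the unconditional bound of Artstein-Avidan--Einhorn--Florentin--Ostrover,
\[
V(P[i],(-P)[d-i])\le\frac{d^d}{i^i(d-i)^{d-i}}\vol(P).
\]
Write the right-hand side as $\binom{d}{i}\gamma_{d,i}\vol(P)$. Robbins' form of Stirling's formula, $m!=\sqrt{2\pi m}\,(m/e)^m e^{r_m}$ with $0<r_m<\frac{1}{12m}$, gives
\[
\gamma_{d,i}=\sqrt{2\pi i(d-i)/d}\;e^{r_i+r_{d-i}-r_d}\le\sqrt{\pi d/2}\,e^{1/12}\qquad(2\le i\le d-2),
\]
using $i(d-i)\le d^2/4$. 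Set $\gamma_{d,i}=1$ for the known indices $i\in\{0,1,d-1,d\}$. Then
\[
\frac{\vol(\lambda P-P)}{\vol(P)}\le\sqrt{\pi d/2}\,e^{1/12}\sum_i\binom{d}{i}^2\lambda^i\le\sqrt{\pi d/2}\,e^{1/12}(1+\sqrt\lambda)^{2d}=C(d,\lambda),
\]
which is the source of the stated constant.

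Two smaller points.

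First, you shrink $P$ about the origin, but $0=a_0$ may be a vertex of $P$. In that case $h(1-\varepsilon)P$ meets $\partial(hP)$ and there is no buffer. Since you are covering by translates, use $(1-\varepsilon)(P-y)+y=(1-\varepsilon)P+\varepsilon y$ with $B(y,\rho)\subseteq P$, shifting the $t_i$ accordingly. Then $h\varepsilon\rho\ge c+\sqrt d/2$ gives the needed containment, as in Lemma~\ref{lem:homothet}.

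Second, the interior statement you flag as the main obstacle is exactly Khovanskii's Theorem~3, uniform in $h\ge 1$, and the paper simply quotes it. If you instead derive it from the vertex-cone structure, note that the exceptional sets can be infinite. They are only confined to a bounded neighbourhood of the cone boundaries, not to bounded regions.
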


We note that all constants are effective, due to the recent work of \cite{GranvilleShakanWalker2023} and \cite{GranvilleSmithWalker2025}. The previous quantitative results depended on $k=|A|$, and, in Nathanson's general abelian-group result, also on the torsion subgroup of $\langle A\rangle$. To improve these bounds in the general case where $A$ is of large cardinality sitting in a relatively smaller dimensional space, one needs to incorporate finer convex-geometric features of the lattice configuration, such as the affine dimension of $\operatorname{conv}(A)$. Instead of measuring the complexity of $A$ by its cardinality, we measure it by the affine dimension $d$ of its convex hull, and incorporate a covering constant depending only on $r$, $d$, and the translative covering density in $\R^d$, up to an arbitrarily small multiplicative loss. In particular, for fixed $r$ and $d$, the bound is uniform over all finite sets $A\subseteq\Z^n$ of affine dimension $d$, regardless of how large $|A|$ is.

This is the principal quantitative improvement over the preceding finite-set bounds when $A$ has many points but small affine dimension. For example, arbitrarily large finite subsets of a line have an asymptotic covering constant arbitrarily close to $1+r$, whereas the earlier estimates grow with $k=|A|$. More generally, for each fixed $d$, both Nathanson's bound and the Biswas--Moens bound grow rapidly with $k$, while the bound proved here is independent of $k$. When $\operatorname{conv}(A)$ is centrally symmetric, the estimate improves further to the symmetric covering constant $(1+r)^d$, again up to the same arbitrarily small multiplicative loss.

Our method also implies the corresponding statement for arbitrary abelian groups. The additional feature is a finite torsion loss.

\begin{theorem}\label{thm:finite-abelian}
Let $A$ be a non-empty finite subset of an abelian group $G$. Fix $a_0\in A$, let $H:=\langle A-a_0\rangle$, and let $T=H_{\mathrm{tors}}$ be the torsion subgroup of $H$. The quotient $H/T$ is a finitely generated free abelian group, so there exists a group isomorphism $\psi:H/T\xrightarrow{\sim}\Z^n$ for some $n\ge 0$.
Let
\[
\overline{A}=\psi(\{a-a_0+T: a\in A\})\subseteq\Z^n,
\]
and define $d=\dim\aff(\overline{A})$. Then, for every integer $r\ge 1$ and every $\delta>0$, $A$ is an asymptotic $(r,\ell)$-approximate group where
\begin{enumerate}
    \item If $d=0$, one may take $\ell\le |T|$.
    \item If $d\ge 1$, one may take $\ell\le |T|(1+\delta)C(d,r)\vartheta(d)$.
    \item If $\conv(\overline A)$ is centrally symmetric, then $C(d,r)$ may be replaced by $(1+r)^d$.
\end{enumerate}
Furthermore, the affine dimension $d$ and the central symmetry of $\conv(\overline{A})$ are independent of the choice of $a_0\in A$ and $\psi$.
\end{theorem}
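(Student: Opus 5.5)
We reduce Theorem~\ref{thm:finite-abelian} to Theorem~\ref{thm:main} by translating, quotienting by torsion, and lifting coverings back along the quotient map. Set $A' := A - a_0 \subseteq H$, so that $0 \in A'$. Since $hA = ha_0 + hA'$ and $rhA = rha_0 + rhA'$, a covering $rhA' \subseteq X' + hA'$ gives
\[
rhA \subseteq \bigl(X' + (r-1)ha_0\bigr) + hA,
\]
with the same number of translates. It therefore suffices to treat $A'$.

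Let $\pi: H \to H/T$ be the quotient map and $\phi = \psi \circ \pi : H \to \Z^n$, so that $\phi(A') = \overline{A}$. Apply Theorem~\ref{thm:main} to $\overline{A}$, which has affine dimension $d$. This gives $h_0$ such that for all $h \ge h_0$ there is $\overline{X}_h \subseteq \Z^n$ with $|\overline{X}_h| \le \ell_0$ and $rh\overline{A} \subseteq \overline{X}_h + h\overline{A}$. Here $\ell_0 = 1$ if $d = 0$, $\ell_0 \le (1+\delta)C(d,r)\vartheta(d)$ otherwise, and $C(d,r)$ may be replaced by $(1+r)^d$ in the centrally symmetric case.

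Note that $T$ is finite, since $H$ is finitely generated abelian. To lift, for each $\bar x \in \overline{X}_h$ pick some $x \in H$ with $\phi(x) = \bar x$; this is possible because $\phi$ is surjective onto $\Z^n$. Set
\[
X := \{x + t : \bar x \in \overline{X}_h,\ t \in T\}, \qquad |X| \le |T|\,\ell_0.
\]
Given $s \in rhA'$, choose $\bar x$ and $b \in hA'$ with $\phi(s) = \bar x + \phi(b)$. Then $s - x - b \in \ker\phi = T$, so $s \in X + hA'$. This yields items (1)--(3) with the factor $|T|$.

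It remains to check independence of choices. Changing $\psi$ composes $\overline{A}$ with an automorphism of $\Z^n$, which is a linear bijection of $\R^n$. Such a map preserves affine dimension and maps centrally symmetric convex hulls to centrally symmetric convex hulls, since $\conv$ commutes with linear maps. Changing $a_0$ to $a_1 \in A$ leaves $H$ unchanged: $a - a_1 = (a - a_0) - (a_1 - a_0)$, so $\langle A - a_1\rangle = \langle A - a_0\rangle$, and symmetrically. Hence $T$ is unchanged as well, and the new $\overline{A}$ is the old one translated by $-\psi(a_1 - a_0 + T)$. Translation preserves both affine dimension and central symmetry.

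The only mildly delicate point is the covering lift. One might hope to avoid the full factor $|T|$, but the statement allows it, so the crude lift above suffices. The main substantive input is Theorem~\ref{thm:main} itself.
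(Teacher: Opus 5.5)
Your proposal is correct and follows essentially the paper's route: translate by $a_0$, apply Theorem~\ref{thm:main} to $\overline{A}$, lift the covering set through $\psi\circ\pi$ and thicken it by $T$, translate back, and check independence of $a_0$ and $\psi$ via translations and $GL_n(\Z)$. The only difference is the case $d=0$: the paper cites Nathanson's Lemma~\ref{lemma:N_torsion} in the finite group $T$, whereas your uniform lift (with $\overline{X}_h=\{0\}$, so $X=T$) handles it directly and for every $h\ge 1$, which is slightly more elementary.
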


Biswas and Moens~\cite{BiswasMoens2022102330} extended the theory of asymptotic approximate groups to \textit{semi-linear sets} -- finite unions of unbounded generalised arithmetic progressions. 

\begin{definition}
A subset $S\subseteq \Z^n$ is called an \emph{additive submonoid}
if
\[
0\in S
\qquad\text{and}\qquad
S+S\subseteq S.
\]
It is called \emph{finitely generated} if there exist
$v_1,\ldots,v_m\in\Z^n$ such that
\[
S=\langle v_1,\ldots,v_m\rangle_{\N}
:=
\left\{
\sum_{j=1}^m c_jv_j : c_j\in\N
\right\}.
\]
A subset $A\subseteq\Z^n$ is called \emph{semi-linear} if it admits a presentation
\[
A=\bigcup_{i=1}^s(a_i+S_i),
\]
where $a_i\in\Z^n$ and each $S_i$ is a finitely generated additive submonoid of $\Z^n$.
\end{definition}

We prove the following:

\begin{theorem}\label{thm:semilinear} 
	Let \[ A=\bigcup_{i=1}^s (a_i+S_i)\subseteq\Z^n \] be a semi-linear set, where each $S_i$ is a finitely generated additive submonoid of $\Z^n$. Put \[ F:=\left\{a_1,\ldots,a_s\right\}, \qquad d_F:=\dim \aff(F). \] Let $r\ge 1$ be an integer and let $\delta>0$. If $d_F=0$, then $A$ is an asymptotic $(r,1)$-approximate group. If $d_F\ge 1$, then there exists $h_0\in\N$ such that, for every $h\ge h_0$, there is a set $X\subseteq\Z^n$ satisfying \[ rhA\subseteq X+hA \] and \[ |X|\le (1+\delta)C(d_F,r)\vartheta(d_F). \] 
	
	Equivalently, $A$ is an asymptotic $(r,\ell)$-approximate group for some integer \[ \ell\le (1+\delta)C(d_F,r)\vartheta(d_F). \] If $\conv(F)$ is centrally symmetric, then one may replace the above cardinality bound by \[ |X|\le (1+\delta)(1+r)^{d_F}\vartheta(d_F). \]
	 \end{theorem}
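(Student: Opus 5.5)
The idea is to reduce to Theorem~\ref{thm:main} applied to the finite set $F$. Put $S:=S_1+\cdots+S_s$, a finitely generated submonoid. The key structural observation is that for $h$ large, $hA$ is essentially $hF+S$ up to an absorbing phenomenon. More precisely,
\[
hA=\bigcup_{h_1+\cdots+h_s=h}\Bigl(\sum_i h_ia_i+\sum_{i:h_i\ge1}S_i\Bigr),
\]
since $h_iS_i=S_i$ for $h_i\ge 1$ (monoids are idempotent under sumset, using $0\in S_i$).

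We then compare with $hF+S$. Trivially $hA\subseteq hF+S$. The plan is to obtain the covering in the form
\[
rhA\subseteq rhF+S\subseteq (X+hF')+S
\]
for a suitable finite set of "interior" sums, and then to land back inside $X+hA$.

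\textbf{Step 1 (the case $d_F=0$).} Here $F=\{a\}$, so $A=a+\bigcup_i S_i$. Then $hA=ha+\bigcup_{\text{nonempty }I}\sum_{i\in I}S_i$ for $h\ge 1$, and this union contains $S$ once $h\ge s$, since we can use every index. Also $hA\subseteq ha+S$, so $hA=ha+S$ for $h\ge s$. Consequently $rhA=rha+S=(r-1)ha+hA$, and $X=\{(r-1)ha\}$ works.

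\textbf{Step 2 (general case).} Apply Theorem~\ref{thm:main} to $F$ with the given $r,\delta$. This gives $h_1$ such that for $h\ge h_1$ there is $X$ with $|X|\le(1+\delta)C(d_F,r)\vartheta(d_F)$ and $rhF\subseteq X+hF$.

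The remaining task is to upgrade the trivial inclusions $hF+S\supseteq hA$ to an equality $hA=hF+S$ for $h$ large, or at least to show that $X+hA\supseteq rhF+S$. The full equality fails: the sum $h a_1$ only carries $S_1$. So instead we use a shift. Take $h\ge h_1+s$ and write
\[
rhA\subseteq rhF+S=(r h-s)F+sF+S .
\]
We want a covering of $(rh-s)F$ by $X'+(h-s)F$ with $X'$ obtained from Theorem~\ref{thm:main} at level $h-s$. This does not directly match, since $r(h-s)\ne rh-s$.

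The cleaner route is to prove $hA\supseteq (h-s)F+\sum_i(a_i+S_i)=(h-s)F+\sigma+S$, where $\sigma=a_1+\cdots+a_s$, by using each index at least once. Then
\[
rhA\subseteq rhF+S .
\]
Next write $rhF\subseteq rh'F+r sF$ with $h'=h-s$, where the inclusion uses $hF= h'F+sF$. Apply the covering to get $rh'F\subseteq X+h'F$. Then
\[
rhA\subseteq X+h'F+rsF+S .
\]
One absorbs the finite set $rsF$ by replacing $X$ with $X+rsF-\sigma$. This costs a factor $|F|^{rs}$, which breaks the bound, so the absorption must instead happen inside the asymptotics.

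\textbf{Main obstacle.} The right tool is Khovanskii-type stabilization. Handle $hF+S$ directly: the set $B=F\cup\{$generators of $S$ shifted$\}$ is awkward, but $S$ acts as a cone. Points of $rhF$ near the boundary of $rh\,\conv F$ are where the extra $S$-slack matters. The plan is to apply the covering argument used in the proof of Theorem~\ref{thm:main} to $rh\conv(F)$ by translates of $h\conv(F)$ with $(1+\delta)$ slack. Each translate $x+h'\conv(F)$ with $h'=h-s$, slightly shrunk, has lattice points (in the relevant coset lattice) lying in $h'F+\Lambda$-structure. By Khovanskii/Granville–Walker, all such points away from the boundary by $O(1)$ lie in $h'F$ for large $h'$. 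Adding $\sigma+S\subseteq sA$ then places them in $hA$.

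I expect the delicate step to be exactly this uniform boundary control. The $(1+\delta)$ slack must absorb the $O(s)$ shift, which is fine because $s/h\to0$. I would therefore rerun the proof of Theorem~\ref{thm:main} with $h$ replaced by $h-s$ and the translation set enlarged by $\sigma$, keeping the same count $|X|$.
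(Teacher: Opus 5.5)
Your final route is correct and is essentially the paper's proof. After discarding the attempts to apply Theorem~\ref{thm:main} at level $h$ and to absorb $rsF$, you use $rhA\subseteq rhF+S$ and $\sigma+(h-s)F+S\subseteq hA$. You then rerun Proposition~\ref{prop:main} with $k=rh$ and $l=h-s$; the slack $\lambda=rh/((h-s)(1-\epsilon_h))\to r$ absorbs the shift, which is exactly Lemma~\ref{lem:shifted-finite-covering}, and you conclude with $X=Y-\sigma$. Your direct treatment of $d_F=0$ via $hA=ha+S$ for $h\ge s$ is also fine and is a special case of the same argument.
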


The proof combines two ingredients from discrete and convex geometry. The first is Khovanskii's theorem on the eventual structure of sumsets of lattice points, which implies that lattice points lying sufficiently far from the boundary of $h\operatorname{conv}(A)$ already belong to $hA$. The second is a Rogers--Zong-type estimate for covering one homothetic copy of a convex body by translates of another. After a rounding step that moves the covering translates into the lattice, the boundary buffer supplied by Khovanskii's theorem absorbs the rounding error. This yields the desired inclusion
\[rhA\subseteq X+hA\]
with $|X|$ bounded only in terms of the affine dimension.

Let $k=|A|$. In the lattice situation considered in Theorem~\ref{thm:main}, Nathanson's torsion factor is equal to $1$, so the earlier finite-set bound is governed by
\[
k\binom{(r+1)(k-1)-1}{k-1}.
\]
The Biswas--Moens estimate gives the uniform abelian-group bound $(4rk)^k$. Both estimates depend on $k$. By contrast, Theorem~\ref{thm:main} gives, for every fixed $\delta>0$, the bound
\[
\ell\le (1+\delta)C(d,r)\vartheta(d),
\qquad d=\dim\operatorname{aff}(A),
\]
which is independent of $k$. Hence the theorem is strongest precisely for finite lattice sets with many points but low-dimensional convex hull. The bound obtained in Theorem~\ref{thm:main} is sharp in its order of growth, see Example~\ref{example:sharpness}.

\section{Preliminaries}\label{sec:prelim}

We restrict ourselves to abelian groups and use additive notation from now on.
Thus, for a subset $A$ of an abelian group, as in the introduction, we write \[ 1A:=A,\qquad 2A:=A+A:=\left\{a_1+a_2:a_1,a_2\in A\right\}, \] and, more generally, for any $h\in \N$ let \[ hA:=\left\{a_1+\cdots+a_h:a_1,\ldots,a_h\in A\right\}, \] 
denote the \textit{h-fold sumset} of $A$.

We introduce the following standard notation from convex and discrete geometry, see Schneider \cite{Schneider}. Throughout, we reserve $d,n\in\N$, $n\ge 1$, and work in $\R^n$ equipped with the standard inner product
$\langle\cdot,\cdot\rangle$ and the Euclidean norm $\|\cdot\|$. For $x\in \R^n$ and real numbers $r,t>0$, denote by \[B(x,r):=\left\{y\in\R^n:\|y-x\|\le r\right\} \text{ and } B(t):=B(0,t)\] the closed
balls of radius $r$ and $t$ centred at $x$ and $0$, respectively. For a non-empty set $K\subseteq \R^n$, its \textit{affine hull} is
\[
\aff(K)
:=
\left\{
\sum_{i=1}^m \lambda_i x_i :
x_i\in K,\ \lambda_i\in\R,\ 
\sum_{i=1}^m \lambda_i=1,\ m\in \N
\right\}.
\]
The \emph{affine dimension} of $K\subseteq \R^n$, denoted $\dim\aff(K)$, is the dimension of the smallest affine subspace of $\R^n$ containing $K$, namely $\aff(K)$.
Equivalently, if $x_0\in K$, then
\[
\dim\aff(K)
=
\dim_{\R} (K-x_0),
\]
where
\[
K-x_0:=\{x-x_0:x\in K\}.
\]
We denote by 
\[\conv(K):=\left\{\sum^m_{i=1}\lambda_i x_i: x_i\in K,\,\lambda_i\ge 0,\,\sum^m_{i=1}\lambda_i=1,\,m\in \N\right\}\]
the \textit{convex hull} of $K$. A set $K\subset\R^n$ is called a \textit{convex body} if it is a compact convex set with non-empty interior. We denote by $\partial K$ the boundary of $K$, i.e. \[\partial K=\left\{x \in K: \forall r>0,\, B(x,r)\not\subseteq K\right\}.\] For non-empty
$K,L\subseteq\R^n$, vectors $v\in\R^n$, and a real number $\lambda\ge 0$:
\begin{itemize}
  \item $K+L=\left\{x+y:x\in K,\,y\in L\right\}$; 
  \item $K+v=\left\{x+v:x\in K\right\}$; 
  \item $K\ominus L:=\left\{x\in\R^n:x+L\subseteq K\right\}$ (\textit{Minkowski difference});
  \item $\lambda K=\left\{\lambda x:x\in K\right\}$;
  \item $\dist(v,K)=\inf_{x \in K}\|v-x\|$.
\end{itemize}
Note that the \textit{difference body} $K-K$ should be considered as $K+(-K)$. 
\begin{lemma}\label{lem:minus}
    Let $K\subseteq \R^n$ be a convex body and $r>0$. Then $K\ominus B(r)=\left\{x\in K:\dist(x,\partial K)\ge r\right\}$.
\end{lemma}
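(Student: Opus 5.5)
We prove the two inclusions separately. Throughout, $K$ is compact and convex with non-empty interior, and $\partial K$ is taken in $\R^n$. Since $K$ is closed, $\partial K = K\setminus \operatorname{int} K$. Write $D:=\{x\in K:\dist(x,\partial K)\ge r\}$.

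\emph{Inclusion $K\ominus B(r)\subseteq D$.} Let $x\in K\ominus B(r)$, so $x+B(r)\subseteq K$; in particular $x\in K$. The open ball of radius $r$ about $x$ lies inside $K$ and is open, so it lies in $\operatorname{int}K$. It therefore contains no point of $\partial K$. Hence every $y\in\partial K$ satisfies $\|y-x\|\ge r$, i.e. $\dist(x,\partial K)\ge r$.

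\emph{Inclusion $D\subseteq K\ominus B(r)$.} Let $x\in K$ with $\dist(x,\partial K)\ge r$, and suppose some $y$ with $\|y-x\|\le r$ lies outside $K$. Consider the segment from $x$ to $y$ and put $t^*:=\sup\{t\in[0,1]: x+t(y-x)\in K\}$. Since $K$ is closed, $z:=x+t^*(y-x)\in K$. Also $t^*<1$, because $y\notin K$, and points $x+t(y-x)\notin K$ exist for $t$ arbitrarily close to $t^*$ from above. So every ball around $z$ meets the complement of $K$, which gives $z\in\partial K$. Moreover $\|z-x\|=t^*\|y-x\|<\|y-x\|\le r$. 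This contradicts $\dist(x,\partial K)\ge r$. Hence $x+B(r)\subseteq K$.

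The only delicate point is the second inclusion, specifically getting the strict inequality $\|z-x\|<r$ so that a boundary point lies strictly closer than $r$. This comes from $t^*<1$, which holds because $y\notin K$ and $K$ is closed. Convexity is not needed; closedness of $K$ suffices, and the argument also works when $x$ itself lies on $\partial K$.
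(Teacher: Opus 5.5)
Your proof is correct and follows essentially the same route as the paper: the first inclusion because the open ball of radius $r$ about $x$ lies in the interior, and the second by finding a boundary point on the segment from $x$ to an exterior point $y$. Your explicit use of $t^*<1$ to get the strict bound $\|z-x\|<r$ fills in a step the paper passes over (it writes only ``within distance $\|x-y\|\le r$'', which alone would not contradict $\dist(x,\partial K)\ge r$), and you are right that closedness of $K$, not convexity, is what the argument uses.
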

\begin{proof}
    By definition $x\in K\ominus B(r)$ if and only if $B(x,r)\subseteq K$, so it suffices to show that, for a convex body $K$, this is equivalent to $x\in K$ and $\dist(x,\partial K)\ge r$. If $B(x,r)\subseteq K$ then $x\in K$, and no boundary point lies within distance $r$ of $x$ (such a point would be interior), so $\dist(x,\partial K)\ge r$. Conversely, if $x\in K$ with $\dist(x,\partial K)\ge r$ and some $y\in B(x,r)$ had $y\notin K$, then by convexity the segment $[x,y]$ would cross $\partial K$ at a point within distance $\|x-y\|\le r$ of $x$, contradicting $\dist(x,\partial K)\ge r$.
\end{proof}
For a convex body $K\subseteq\R^n$, the \emph{support function} is
\[
  \sigma_K:\R^n\to\R,\qquad \sigma_K(u):=\sup_{x\in K}\langle u,x\rangle.
\]
The following standard properties are collected in Schneider~\cite[\S~1.7]{Schneider}:
for compact convex $K,L\subseteq \R^n$, $u,v\in\R^n$, $\lambda\ge 0$,
\begin{enumerate}[label=\textup{(P\arabic*)}]
  \item $\sigma_{\lambda K}=\lambda \sigma_K$;\label{P1}
  \item $\sigma_{K+v}(u)=\sigma_K(u)+\langle v,u\rangle$;\label{P2}
  \item $\sigma_{K+L}=\sigma_K+\sigma_L$;\label{P3}
  \item $K\subseteq L\iff \sigma_K\le \sigma_L$ pointwise;\label{P4}
  \item $\sigma_{B(t)}(u)=t\|u\|$.\label{P5}
\end{enumerate}

If $B(u,\lambda)\subseteq K$ then
$B(0,\lambda)\subseteq K-u$, and by properties~\ref{P4}
and~\ref{P5},
\begin{equation}\label{eq:inradius}
 \sigma_{K- u}(v)\ge \sigma_{B(\lambda)}(v)= \lambda\|v\| \qquad\text{for all }v\in\R^n.
\end{equation}

\begin{lemma}\label{lem:homothet}
Let $P\subseteq \R^n$ be a convex body, $y\in\R^n$, and $\rho>0$ with $B(y,\rho)\subseteq P$.
For every $t>0$ and $\epsilon\in[0,1]$ satisfying $\epsilon\ge t/\rho$, we have
\[
  (1-\epsilon)(P - y)+y\;\subseteq\;P\ominus B(t).
\]
\end{lemma}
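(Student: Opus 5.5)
The natural approach is a direct support-function argument, using Lemma~\ref{lem:minus} to turn the Minkowski difference into a ball-containment statement. By definition, $x\in P\ominus B(t)$ iff $x+B(t)\subseteq P$. So I will fix an arbitrary point $x=(1-\epsilon)(p-y)+y$ with $p\in P$, and show that $B(x,t)\subseteq P$.

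The key step is to write $x$ as a convex combination,
\[
x=(1-\epsilon)p+\epsilon y .
\]
For any $w\in B(t)$, I then aim to write
\[
x+w=(1-\epsilon)p+\epsilon\Bigl(y+\tfrac{w}{\epsilon}\Bigr).
\]
Since $\|w/\epsilon\|\le t/\epsilon\le\rho$, which is exactly the hypothesis $\epsilon\ge t/\rho$, the point $y+w/\epsilon$ lies in $B(y,\rho)\subseteq P$. Convexity of $P$ then gives $x+w\in P$. Hence $x+B(t)\subseteq P$, that is, $x\in P\ominus B(t)$.

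Equivalently, in the language of \eqref{eq:inradius} and \ref{P1}--\ref{P5}, one can phrase this as a support-function inequality:
\[
\sigma_{(1-\epsilon)(P-y)+B(t)}(v)=(1-\epsilon)\sigma_{P-y}(v)+t\|v\| \le (1-\epsilon)\sigma_{P-y}(v)+\epsilon\rho\|v\| \le \sigma_{P-y}(v).
\]
The last step uses $\rho\|v\|\le\sigma_{P-y}(v)$ from \eqref{eq:inradius}. Applying \ref{P4} then gives $(1-\epsilon)(P-y)+B(t)\subseteq P-y$, which is the claim after translating by $y$. I would present whichever version matches the paper's style, probably the support-function one since the properties were just collected.

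The only subtle point is the case $\epsilon=0$, where the division by $\epsilon$ is not allowed. This case cannot occur: $\epsilon\ge t/\rho>0$ since $t>0$. The case $\epsilon=1$ is also harmless, since then $x=y$ and $B(y,t)\subseteq B(y,\rho)\subseteq P$ because $t\le\rho$. There is no real obstacle beyond this bookkeeping.
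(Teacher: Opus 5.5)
Your proof is correct. Your second, support-function version is the paper's own argument. The paper rewrites the inclusion as $(1-\epsilon)(P-y)+y+B(t)\subseteq P$ and expands both support functions via \ref{P1}--\ref{P3} and \ref{P5}. It then reduces to the scalar inequality $t\|u\|\le\epsilon\,\sigma_{P-y}(u)$, which follows from \eqref{eq:inradius} and $t\le\epsilon\rho$.

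Your primary argument is a genuinely more elementary route. Writing $x+w=(1-\epsilon)p+\epsilon(y+w/\epsilon)$ shows each point of $x+B(t)$ directly as a convex combination of two points of $P$. It therefore uses only convexity of $P$: it needs neither the compactness that \ref{P4} requires nor any support-function calculus. It also makes clear that the hypothesis $\epsilon\ge t/\rho$ is exactly the condition that $y+\epsilon^{-1}B(t)$ fits inside $B(y,\rho)$. The paper's version gains consistency with the preliminaries, since it reuses \eqref{eq:inradius}, which was set up for this step.

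You do not need Lemma~\ref{lem:minus}: the definition of $\ominus$ already gives $x\in P\ominus B(t)\iff x+B(t)\subseteq P$. Your observation that $\epsilon>0$ (because $t>0$) is exactly what justifies dividing by $\epsilon$.
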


\begin{proof}
By the definition of Minkowski difference, the inclusion is equivalent to
\[
(1-\epsilon)(P-y)+y+B(t)\subseteq P.
\]
Both sets are compact and convex, so by~\ref{P4} it suffices to show
$\sigma_{(1-\epsilon)(P-y)+y+B(t)}\le \sigma_P$. By~\ref{P1}--\ref{P3} and~\ref{P5}:
\[
  \sigma_{(1-\epsilon)(P-y)+y+B(t)}(u)=(1-\epsilon)\sigma_{P- y}(u)+\langle y,u\rangle+t\|u\|,
  \qquad
  \sigma_P(u)=\sigma_{P- y}(u)+\langle y,u\rangle.
\]
Subtracting $\langle y,u\rangle$, the required inequality becomes
$t\|u\|\le\epsilon \sigma_{P- y}(u)$, which follows from~\eqref{eq:inradius}
and $\epsilon\ge t/\rho$.
\end{proof}

Let $\Gamma \subseteq \R^n$. We call $\Gamma$ a \textit{lattice} of \textit{rank} $d$ if there exist $d$ linearly independent vectors $b_1,\ldots,b_d\in \R^n$, i.e., $\Z$-\textit{basis}, such that $\Gamma$ is generated over $\Z$ by these vectors, that is, 

\[\Gamma=\left\langle b_1,\ldots,b_d\right\rangle_\Z := \left\{\sum_{i=1}^d z_ib_i:z_i \in \Z\right\}.\] 

For example, $\Z^n\subseteq \R^n$ is a lattice of rank $n$ generated by the standard basis $e_1,\ldots,e_n$. We say that $\Gamma$ is \textit{full-rank} if $n=d$. For a full-rank lattice $\Gamma\subset\R^n$, set
$\mu(\Gamma):=\sup_{x\in\R^n}\inf_{\gamma\in\Gamma}\|x-\gamma\|$.
For $\Gamma=\Z^n$, $\mu(\Z^n)=\sqrt n/2$, by taking $x=(\frac{1}{2},\ldots,\frac{1}{2})\in \R^n$.

For a finite $A\subseteq \R^n$ we denote by $|A|$ the \textit{cardinality} of $A$. We say that $A\subset\Z^n$ generates a lattice to mean that the $\Z$-module $\langle A \rangle_\Z=\left\{\sum_{a\in A}z_a a: z_a\in\Z\right\}$ is a full-rank lattice in its real span.

\begin{lemma}\label{lem:rounding}
Let $\Gamma\subset\R^n$ be a full-rank lattice with $\mu=\mu(\Gamma)$.
Let $K,L, M\subseteq\R^n$, where $M$ is finite, and
$K\subseteq M+L$. Then there exists $\widetilde M\subseteq\Gamma$ with
$|\widetilde M|\le|M|$ and
\[
  K\;\subseteq\;\widetilde M+\bigl(L+B(\mu)\bigr).
\]
\end{lemma}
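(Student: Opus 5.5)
We round each translate in $M$ to a nearest lattice point and absorb the rounding error into the ball $B(\mu)$.

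\emph{Construction.} For each $m\in M$, the definition of $\mu=\mu(\Gamma)$ as a supremum of infima gives $\inf_{\gamma\in\Gamma}\|m-\gamma\|\le\mu$. Since $\Gamma$ is a full-rank lattice it is discrete, so only finitely many lattice points lie in any bounded ball around $m$. The infimum is therefore attained, and we can choose $\gamma_m\in\Gamma$ with $\|m-\gamma_m\|\le\mu$. We then set
\[
\widetilde M:=\{\gamma_m: m\in M\}\subseteq\Gamma .
\]
This is the image of $M$ under the map $m\mapsto\gamma_m$, so $|\widetilde M|\le|M|$.

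\emph{Inclusion.} Let $x\in K$. Since $K\subseteq M+L$, we can write $x=m+\ell$ with $m\in M$ and $\ell\in L$. Then
\[
x=\gamma_m+\ell+(m-\gamma_m),
\]
where $\ell\in L$ and $m-\gamma_m\in B(\mu)$ because $\|m-\gamma_m\|\le\mu$. Hence $x\in\widetilde M+(L+B(\mu))$, which proves $K\subseteq\widetilde M+\bigl(L+B(\mu)\bigr)$.

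\emph{Main subtlety.} The only delicate point is that the infimum defining the distance from $m$ to $\Gamma$ is attained. This follows from discreteness: any closed ball in $\R^n$ meets $\Gamma$ in finitely many points. If one preferred to avoid this, it would be enough to pick $\gamma_m$ with $\|m-\gamma_m\|\le\mu+\eta$ and let $\eta\to 0$. That route is not needed here, so the closed ball $B(\mu)$ suffices. No convexity or other hypotheses on $K$ and $L$ are used.
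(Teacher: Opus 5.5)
Your proposal is correct and follows the paper's proof essentially verbatim: round each $m\in M$ to a lattice point $\widetilde m$ with $\|m-\widetilde m\|\le\mu$ and absorb the error $m-\widetilde m$ into $B(\mu)$. Your use of discreteness of $\Gamma$ to show that the infimum is attained fills in a step the paper only asserts ``follows from the definition of $\mu$''.
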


\begin{proof}
For each $m\in M$, let $\widetilde m\in\Gamma$ such that $\|m-\widetilde m\|\le\mu$, whose existence follows from the definition of $\mu$. 
Let $\widetilde M:=\left\{\widetilde m:m\in M\right\}$. For $k\in K$, write $k=m+\ell$ with
$m\in M$ and $\ell\in L$. Then
\[
k=\widetilde m+\bigl(\ell+(m-\widetilde m)\bigr),
\]
with
$(m-\widetilde m)+\ell\in B(\mu)+L$.
\end{proof}

\begin{lemma}\label{lem:absorb}
For any $K\subseteq\R^n$ and any $s\ge t\ge 0$,
\[
  (K\ominus B(s))+B(t)\;\subseteq\;K\ominus B(s-t).
\]
\end{lemma}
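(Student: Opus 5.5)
We prove the inclusion directly from the definition of the Minkowski difference, $K\ominus L=\{x: x+L\subseteq K\}$. No convexity of $K$ is needed, which matches the statement, where $K$ is arbitrary.

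Let $x\in K\ominus B(s)$ and $b\in B(t)$. We must show $x+b\in K\ominus B(s-t)$, that is, $x+b+B(s-t)\subseteq K$. The key step is the triangle inequality: for any $c\in B(s-t)$ we have $\|b+c\|\le \|b\|+\|c\|\le t+(s-t)=s$. Hence $b+c\in B(s)$, and therefore
\[
x+b+c\in x+B(s)\subseteq K,
\]
where the last inclusion is the defining property of $x\in K\ominus B(s)$. Since $c\in B(s-t)$ was arbitrary, $x+b+B(s-t)\subseteq K$.

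Equivalently, the argument is the Minkowski-sum inclusion $B(t)+B(s-t)\subseteq B(s)$. This holds because $s\ge t\ge 0$ makes both radii nonnegative, so both balls are well-defined and non-empty.

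No step is a genuine obstacle. The only point to watch is the degenerate radius case ($t=s$ or $t=0$), where $B(0)=\{0\}$. The same computation goes through unchanged there, so no case split is needed.
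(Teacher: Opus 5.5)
Your proof is correct and is essentially the paper's argument. Both reduce the claim to the inclusion $B(t)+B(s-t)\subseteq B(s)$, which you justify explicitly via the triangle inequality, and then unwind the definition of the Minkowski difference to get $x+B(t)+B(s-t)\subseteq x+B(s)\subseteq K$.
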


\begin{proof}
Let $x\in K\ominus B(s)$, so that $x+B(s)\subseteq K$. Since $B(t)+B(s-t)\subseteq B(s)$, we have
\[
  (x+B(t))+B(s-t)\subseteq x+B(s)\subseteq K,
\]
which is precisely $x+B(t)\subseteq K\ominus B(s-t)$. As $x$ was arbitrary, $(K\ominus B(s))+B(t)\subseteq K\ominus B(s-t)$.
\end{proof}

\section{Proofs of Theorems~\ref{thm:main} and~\ref{thm:finite-abelian}}\label{sec:finite}
We will need the following theorems. The first is due to
Khovanskii~\cite{Khovanskii1992}.

\begin{theorem}\label{thm:structure}
Let $A\subseteq \Z^n$ be a finite set such that  $\langle A-A\rangle_\Z=\Z^n$. Let
$P:=\mathrm{conv}(A)$. There exists a constant $f_A\ge 0$, depending only
on $A$, such that for every integer $h\ge 1$,
\begin{equation*}
  (hP\ominus B( f_A))\cap\Z^n\subseteq\;hA.
\end{equation*}
\end{theorem}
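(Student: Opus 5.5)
We will obtain Theorem~\ref{thm:structure} from a general fact about finitely generated semigroups, working in $\Z^{n+1}$ with the lifted set $\widehat A:=\{(1,a):a\in A\}$.

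\emph{Step 1 (normalisation and lifting).} Fix $a_0\in A$ and replace $A$ by $A-a_0$. Then $hA$ and $hP$ are both translated by the lattice vector $ha_0$, and $\ominus B(f_A)$ commutes with translation, so the statement is unchanged and we may assume $0\in A$. Then $\langle A\rangle_\Z\supseteq\langle A-A\rangle_\Z=\Z^n$, and $P$ is a convex body because $A-A$ spans $\R^n$.

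Let $S\subseteq\Z^{n+1}$ be the semigroup generated by $\widehat A$, so that $S=\{(h,x):h\in\N,\ x\in hA\}$. Let $C$ be the cone $\{(t,y):t\ge 0,\ y\in tP\}$ generated by $\widehat A$. Since $(1,0)\in\widehat A$ and $(0,a)=(1,a)-(1,0)$, the group generated by $S$ is $\Z^{n+1}$.

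\emph{Step 2 (key semigroup lemma).} We show that there is $v\in S$ with $\bigl(v+C\bigr)\cap\Z^{n+1}\subseteq S$.
\begin{enumerate}
\item By Carath\'eodory's theorem for cones, $C$ is the finite union of the simplicial cones $C_B$, where $B$ runs over linearly independent subsets of $\widehat A$.
\item Every lattice point of $C_B$ has the form $p+\sum_{b\in B}c_bb$ with $c_b\in\N$. Here $p$ is a lattice point of the half-open parallelepiped $\Pi_B=\{\sum_{b\in B} t_bb: t_b\in[0,1)\}$.
\item Each such $p$ is one of finitely many lattice points, and it lies in the group generated by $S$. So we can write $p=s_p-s'_p$ with $s_p,s'_p\in S$.
\item Put $v:=\sum_{B}\sum_{p}s'_p\in S$.
\end{enumerate}
Now let $z\in(v+C)\cap\Z^{n+1}$. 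Then $z-v$ is a lattice point of $C$, so $z-v\in C_B$ for some $B$ and $z-v=p+\sum_{b\in B} c_bb$ as above. Hence
\[
z=s_p+(v-s'_p)+\sum_{b\in B} c_bb .
\]
Each term lies in $S$: $v-s'_p$ is a sum of the remaining $s'_{p'}$, and the last term is a non-negative integer combination of elements of $\widehat A$. Therefore $z\in S$.

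\emph{Step 3 (translating back).} Write $v=(h_0,y_0)$ with $y_0\in h_0A\subseteq h_0P$. Let $R:=\max_{x\in P}\|x\|$, and set $f_A:=2h_0R+1$.

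Case $h<h_0$. The set $hP$ lies in the ball $B(hR)$, whose radius is less than $f_A$, so it contains no ball of radius $f_A$. Hence $hP\ominus B(f_A)$ is empty and there is nothing to prove.

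Case $h\ge h_0$. Let $x\in(hP\ominus B(f_A))\cap\Z^n$. By~\ref{P1}--\ref{P5}, we have $\langle x,u\rangle+f_A\|u\|\le h\sigma_P(u)$ for every $u$. We want $x-y_0\in(h-h_0)P$, i.e.
\[
\langle x-y_0,u\rangle\le (h-h_0)\sigma_P(u)\qquad\text{for all }u .
\]
By the previous inequality, it suffices that $h_0\sigma_P(u)-\langle y_0,u\rangle\le f_A\|u\|$. This holds because $y_0\in h_0P\subseteq B(h_0R)$ and $\sigma_P(u)\le R\|u\|$.

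Then $(h,x)-v\in C\cap\Z^{n+1}$, so $(h,x)\in v+C$, and Step~2 gives $(h,x)\in S$. That is, $x\in hA$.

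The main obstacle is Step~2, which is where the generation hypothesis $\langle A-A\rangle_\Z=\Z^n$ is used. The finiteness of the parallelepiped lattice points is what makes a single shift $v$ suffice. The remaining steps are routine support-function estimates of the kind in Lemma~\ref{lem:homothet}.
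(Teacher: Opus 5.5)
Your proof is correct, but it takes a different route from the paper. The paper does not prove this statement internally. It quotes Khovanskii's Theorem~3 in \cite{Khovanskii1992}, which says that every lattice point $z\in hP$ with $\dist(z,\partial(hP))\ge f_A$ lies in $hA$. It then uses Lemma~\ref{lem:minus} to rewrite that distance condition as membership in $hP\ominus B(f_A)$.

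You instead reprove the content of Khovanskii's theorem from scratch:
\begin{enumerate}
\item You lift $A$ to height one and prove the standard conductor lemma for the affine semigroup $S$: there is a single $v\in S$ with $(v+C)\cap\Z^{n+1}\subseteq S$. This uses conic Carath\'eodory and the finitely many lattice points in the half-open parallelepipeds.
\item You convert the shifted-cone condition directly into the Minkowski-difference condition using support functions. This bypasses Lemma~\ref{lem:minus} altogether.
\item You obtain the explicit value $f_A=2h_0R+1$ in terms of the first coordinate $h_0$ of $v$.
\end{enumerate}

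The paper's citation is shorter and connects to the effective versions of Khovanskii's theorem in \cite{GranvilleShakanWalker2023,GranvilleSmithWalker2025}. Your argument is self-contained, but it gives no a priori bound on the height of $v$, since the elements $s'_p$ are only shown to exist. That costs nothing here. As the paper notes, $f_A$ only affects the threshold for $h$ in the proof of Theorem~\ref{thm:main}, not the covering constant.

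One minor remark: the translation to $0\in A$ in your Step~1 is harmless but not needed. The vectors $(1,a_0)$ and $(0,a-a')$ with $a,a'\in A$ already generate $\Z^{n+1}$.
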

\begin{proof}
    This follows directly from Theorem~3 in \cite{Khovanskii1992}, which gives
\begin{equation*}
  \bigl\{z\in hP\cap\Z^n:\mathrm{dist}(z,\partial(hP))\ge f_A\bigr\}
  \;\subseteq\;hA,
\end{equation*}
    along with Lemma~\ref{lem:minus}, the statement follows.
\end{proof}
\begin{remark}
The constants in the above theorem can all be made effective due to \cite{GranvilleShakanWalker2023} and \cite{GranvilleSmithWalker2025}. 
\end{remark}

For any $B\subseteq \R^n$ and $X,Y\subseteq \R^n$, we say that $B$ is \textit{covered} by translates of $Y$ if $B\subseteq X+Y$ for some finite set $X\subseteq \R^n$; such $X$ is called a \textit{covering set}. 
We will need the following covering number bound (i.e. the cardinality of covering set), which is primarily based on Rogers--Zong~\cite{RogersZong1997}.

\begin{theorem}\label{thm:RZ}
For every convex body $K\subseteq\R^n$ and every $\lambda\ge 1$,
\[
  N(\lambda K, K)
  :=\min\bigl\{|M|:M\subseteq\R^n,\,\lambda K \subseteq M+ K\bigr\}
  \le C(n,\lambda)\theta(K),
\]
where $C(n,\lambda)$ is as defined in Theorem~\ref{thm:main}, and $\theta(K)$ denotes the infimum covering density of $\R^n$ with translates of $K$.

Furthermore, If $K$ is centrally symmetric (i.e. symmetric about some point $c\in \R^n$), the factor $C(n,\lambda)$ can be replaced by $(1+\lambda)^n$ for all $n\ge 1$.
\end{theorem}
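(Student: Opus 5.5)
The natural route is to take the covering bounds for convex bodies from the literature, Rogers--Zong for general bodies and the classical volume/packing argument for symmetric ones, and bring them to the stated form. The basic tool is the Rogers--Zong inequality
\[
N(\lambda K,K)\le \frac{\vol(\lambda K - K)}{\vol(K)}\,\theta(K),
\]
valid for any convex body $K$ and $\lambda\ge 1$. One proves it by taking an almost-optimal translative covering of $\R^n$ by $K$ with density close to $\theta(K)$. By averaging over translations, some translate of $\lambda K$ meets at most about $\theta(K)\vol(\lambda K-K)/\vol(K)$ of the covering translates. The translates that meet $\lambda K$ have their centres in $\lambda K-K$, and this gives the bound. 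The density can be taken arbitrarily close to $\theta(K)$ and the left side is an integer, so passing to the limit yields the inequality with $\theta(K)$ itself. I will cite \cite{RogersZong1997} for this step rather than reprove it.

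Everything then reduces to bounding $\vol(\lambda K-K)/\vol(K)=\vol(\lambda K+(-K))/\vol(K)$. Expanding by mixed volumes,
\[
\vol(\lambda K+(-K))=\sum_{i=0}^n\binom{n}{i}\lambda^i V(K[i],-K[n-i]).
\]
The Rogers--Shephard type estimate for mixed volumes gives $V(K[i],-K[n-i])\le\binom{n}{i}\vol(K)$. Hence
\[
\frac{\vol(\lambda K-K)}{\vol(K)}\le\sum_{i=0}^n\binom{n}{i}^2\lambda^i.
\]
For $n=1,2,3$ this is exactly $1+\lambda$, $1+4\lambda+\lambda^2$ and $1+9\lambda+9\lambda^2+\lambda^3$.

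For $n\ge4$ I bound the sum crudely. Since $\binom{n}{i}^2\lambda^i=\bigl(\binom{n}{i}\lambda^{i/2}\bigr)^2$, we have
\[
\sum_i\binom{n}{i}^2\lambda^i\le \max_i\binom{n}{i}\lambda^{i/2}\cdot\sum_i\binom{n}{i}\lambda^{i/2}.
\]
A sharper approach is to write the sum as the coefficient extraction $\binom{2n}{n}$-type identity
\[
\sum_i\binom{n}{i}^2\lambda^i=[x^n](1+x)^n(x+\lambda)^n,
\]
which is at most $(1+\sqrt\lambda)^{2n}\cdot\binom{2n}{n}/2^{2n}\cdot(\text{factor})$. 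The target constant $(1+\sqrt\lambda)^{2n}\sqrt{\pi n/2}\,e^{1/12}$ suggests the following. Use $\binom{n}{i}^2\le \binom{2n}{2i}\cdot c_{n,i}$ with $c_{n,i}=\binom{n}{i}^2/\binom{2n}{2i}$, and bound $\max_i c_{n,i}$ by Stirling with explicit error terms (Robbins' bounds give the $e^{1/12}$). Then
\[
\sum_i\binom{2n}{2i}(\sqrt\lambda)^{2i}\le(1+\sqrt\lambda)^{2n}.
\]
The step I expect to be the main obstacle is this Stirling computation: one must check that $\binom{n}{i}^2/\binom{2n}{2i}\le\sqrt{\pi n/2}\,e^{1/12}$ uniformly in $i$. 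The extreme $i\approx n/2$ gives roughly $\sqrt{\pi n/2}$, and the endpoints give $1$. I would verify it with Robbins' inequalities $\sqrt{2\pi m}(m/e)^m\le m!\le\sqrt{2\pi m}(m/e)^me^{1/(12m)}$, treating $i\in\{0,n\}$ separately.

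For the centrally symmetric case, translate so that $K=-K$. Then $\lambda K-K=(\lambda+1)K$, and the volume ratio is exactly $(1+\lambda)^n$. Substituting into the Rogers--Zong inequality completes the proof. Note that central symmetry about $c$ is harmless, because $N(\lambda K,K)$ and $\theta(K)$ are translation invariant.
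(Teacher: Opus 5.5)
Your overall route is the same as the paper's: Rogers--Zong, then the mixed-volume expansion of $\vol(\lambda K-K)$, then the exact identity $\lambda K-K=(1+\lambda)K$ in the symmetric case. There is, however, a genuine gap at the key step.

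The termwise inequality $V(K[i],(-K)[n-i])\le\binom{n}{i}\vol(K)$, which you attribute to a ``Rogers--Shephard type estimate'', is Godbersen's conjecture for $2\le i\le n-2$. That conjecture is open, and the paper records it as such in Remark~\ref{rmk}(2).

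What is actually known is the case $i\in\{0,1,n-1,n\}$. For $i=n-1$, place the centroid of $K$ at the origin so that $-K\subseteq nK$; monotonicity of mixed volumes then gives $V(K[n-1],-K)\le V(K[n-1],nK)=n\vol(K)$. The case $i=1$ follows by reflecting. The Rogers--Shephard inequality itself only bounds the aggregate at $\lambda=1$, namely $\sum_i\binom{n}{i}V(K[i],(-K)[n-i])=\vol(K-K)\le\binom{2n}{n}\vol(K)$. It does not bound the individual coefficients.

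So your argument is complete for $n\le 3$, where every index lies in $\{0,1,n-1,n\}$, and in the centrally symmetric case. It is not complete for $n\ge 4$.

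The missing input for $2\le i\le n-2$ is the Artstein-Avidan--Einhorn--Florentin--Ostrover bound $V(K[i],(-K)[n-i])\le\frac{n^n}{i^i(n-i)^{n-i}}\vol(K)=\binom{n}{i}\gamma_{n,i}\vol(K)$. Write $m!=\sqrt{2\pi m}\,(m/e)^m e^{\eta_m}$ with $0<\eta_m<1/(12m)$ (Robbins). Then $\gamma_{n,i}=\sqrt{2\pi i(n-i)/n}\;e^{\eta_i+\eta_{n-i}-\eta_n}\le\sqrt{\pi n/2}\,e^{1/12}$, using $i(n-i)\le n^2/4$ and $i,n-i\ge 2$. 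The estimate $\sum_i\binom{n}{i}^2\lambda^i\le\bigl(\sum_i\binom{n}{i}\lambda^{i/2}\bigr)^2=(1+\sqrt\lambda)^{2n}$ then finishes.

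This also explains why your $n\ge 4$ computation did not come together. If Godbersen's conjecture were available, you would get $\vol(\lambda K-K)/\vol(K)\le\sum_i\binom{n}{i}^2\lambda^i\le(1+\sqrt\lambda)^{2n}$ with no extra factor at all. The factor $\sqrt{\pi n/2}\,e^{1/12}$ in $C(n,\lambda)$ is exactly the price of replacing the conjectured $\binom{n}{i}$ by the proven $n^n/(i^i(n-i)^{n-i})$.

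It has nothing to do with $\binom{n}{i}^2/\binom{2n}{2i}$. By Vandermonde, $\binom{2n}{2i}\ge\binom{n}{i}^2$, so your $c_{n,i}\le 1$ for every $i$. Near $i=n/2$ it is about $2/\sqrt{\pi n}$, not $\sqrt{\pi n/2}$. The Stirling estimate you flagged as the main obstacle is therefore vacuous in your setup. The real difficulty, and the place where the Stirling/Robbins computation belongs, is the mixed-volume inequality for the middle indices.
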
 
\begin{proof}
    We use the following inequality from \cite[(6)]{RogersZong1997}:
    \[N(\lambda K, K)\le \dfrac{\vol(\lambda K- K)}{\vol( K)}\theta(K),\]
     where $\vol(\cdot)$ denotes the Lebesgue volume of a given measurable set.  By mixed volume expansion,
\begin{align*}
\vol(\lambda K - K) &= \sum_{i=0}^{n}\binom{n}{i} V_i \, \lambda^i, \\
V_i &= V(K[i], (-K)[n-i]),
\end{align*}
where $V(K_1,\ldots,K_n)$ is \textit{the mixed volume} on $n$ convex bodies $K_1,\ldots,K_n$  (cf. $(5.28)$~in \cite{Schneider}). We have $V(K,\ldots,K)=\vol(K)$, and for any $i\in \N$ we write $K[i]$ to mean $K$ repeated $i$ times, i.e. $V(K[i],(-K)[n-i])$ is a shorthand for $V(\underbrace{K,\ldots,K}_{\text{$i$ times}},\underbrace{-K,\ldots,-K}_{\text{$n-i$ times}})$. Hence $V_0=V_n=\vol(K)$.

It is a classical fact of mixed volumes (see, e.g., \cite{AEFO}) that for $i \in \left\{0, 1, n-1, n\right\}$, one has $V_i \le \binom{n}{i}\vol(K)$. Therefore, for dimensions $n \le 3$, all possible indices $i \in \left\{0, \dots, n\right\}$ strictly fall into this subset. This means that for any convex body in $n \le 3$ we have:
\begin{align*}
\operatorname{vol}(\lambda K - K) 
&= \sum_{i=0}^{n}\binom{n}{i}V_i\lambda^i \\
&\le \sum_{i=0}^{n}\binom{n}{i}\left[\binom{n}{i}\operatorname{vol}(K)\right]\lambda^i \\
&= \operatorname{vol}(K) \sum_{i=0}^n \binom{n}{i}^2 \lambda^i \\
&= \operatorname{vol}(K) q_n(\lambda),
\end{align*}
where $q_n(\lambda)=\sum_{i=0}^n\binom{n}{i}^2\lambda^i$.  Substituting $q_1(\lambda)=1+\lambda$, $q_2(\lambda)=1+4\lambda+\lambda^2$, and $q_3(\lambda)=1+9\lambda+9\lambda^2+\lambda^3$, we obtain the exact polynomial bounds $C(n,\lambda) = q_n(\lambda)$ for $n \le 3$

For $i=2,\ldots,n-2$ we have the following \cite[Theorem~1.4]{AEFO}:

\begin{align*}
V_i &\le \frac{n^n}{i^i (n-i)^{n-i}} \vol(K) = \binom{n}{i} \gamma_{n,i} \vol(K), \\
\gamma_{n,i} &:= \frac{n^n}{i^i (n-i)^{n-i} \binom{n}{i}} \le \sqrt{\frac{\pi n}{2}} e^{1/12} =: c_n,
\end{align*}
    where the last bounds come from applying Robbins' bound for Stirling's approximation \cite{Robbins}, and using $i(n-i)\le n^2/4$. For $i=0,1,n-1,n$, let $\gamma_{n,i}=1\le c_n$ (as above). Consequently, for $n\ge 4$,

\begin{align*}
\frac{\vol(\lambda K - K)}{\vol(K)} \le \sum^n_{i=0} \binom{n}{i}^2 \gamma_{n,i} \lambda^i \le c_n \sum^n_{i=0}\left(\binom{n}{i}\lambda^{i/2}\right)^2\le c_n \left(\sum^n_{i=0}\binom{n}{i}\lambda^{i/2}\right)^2=c_n(1+\sqrt{\lambda})^{2n}=C(n,\lambda).
\end{align*}
    
      If $K$ is centrally symmetric, then by the translation invariance of covering numbers we may assume $K=-K$. In this case $\lambda K- K=\lambda K+ K = (1+\lambda)K$, so $\vol(\lambda K-K)=(1+\lambda)^n\vol(K)$, which gives
    $N(\lambda K,K)\le (1+\lambda)^n\theta(K)$.
\end{proof}

\begin{remark}\label{rmk}\begin{enumerate}
    \item Note that for any $v\in \R^n$, we have $N(K,L+v)=N(K,L)$, i.e. the smallest number of translates is translation invariant: if there exists $M\subseteq \R^n$ such that $K\subseteq M+L$ and $|M|=N(K,L)$ then for any $v\in \R^n$, $K\subseteq (M-v)+L+v$, where $|M-v|=|M|$.

    \item Godbersen’s conjecture (cf. \cite[Conjecture~1.1]{AEFO}) states that $V(K[i],(-K)[n-i])\le \binom{n}{i}\vol(K)$, which would give us $N(\lambda K,K)\le q_n(\lambda)\theta(K)$, for $q_n$ as defined in the proof.

    \item For a convex body $K\subseteq \R^n$, we can bound $\theta(K)$ with bounds for \textit{the translative covering density} $\vartheta(n):=\sup\{\theta(K):K\subseteq \R^n\text{ is a convex body}\}$ in $\R^n$. Specifically:
    \[\vartheta( n)\le \begin{cases}
        1&\text{ if }n=1,\\
        3/2& \text{ if }n=2,\\
         n\log n +n\log\log n +5n&\text{ if }n\ge 3,  
    \end{cases}
    \]
    where $n=2$ is from \cite{fary50} and $n\ge 3$ from \cite{Rogers1957} (while $n=1$ is trivial).
\end{enumerate}
\end{remark}

\begin{proposition}\label{prop:main}
Let $n\ge 1$ and $A\subset\Z^n$ be a finite set such that $0\in A$ and $\langle A\rangle_{\Z}=\Z^n$.
Let $P:=\conv(A)$. Let $\rho$ be the inradius of $P$, that is, $\rho:=\max\{r>0:B(z,r)\subseteq P \text{ for some }z\in \R^n\}$, and choose $y\in \R^n$ such that $B(y,\rho)\subseteq P$. Let $f_A$ be as in
Theorem~\ref{thm:structure}. Let $k \ge l \ge 1$ be integers. For every $\epsilon\in[0,1)$ satisfying
\begin{equation}\label{eq:eps}
  \epsilon\;\ge\;\frac{f_A+\sqrt n/2}{l\rho},
\end{equation}
define $\lambda:=\frac{k}{l(1-\epsilon)}$. Then there exists a set
$\widetilde M\subseteq\Z^n$ with
\[
  |\widetilde M|\le C(n, \lambda)\vartheta(n)\qquad
  \text{and} \qquad
  kA\subseteq\widetilde M+lA.
\]
If $P$ is centrally symmetric, the cardinality bound may be replaced by $|\widetilde M| \le (1+\lambda)^n \vartheta(n)$.
\end{proposition}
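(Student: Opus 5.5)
The plan is to cover $kP$ by few translates of a slightly shrunk copy of $lP$, round the translates into $\Z^n$, and use Khovanskii's buffer to land inside $lA$.

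\textbf{Reduction and hypothesis check.} Since $0\in A$ and $\langle A\rangle_\Z=\Z^n$, we have $\langle A-A\rangle_\Z\supseteq\langle A\rangle_\Z=\Z^n$. Hence Theorem~\ref{thm:structure} applies and gives
\[
(lP\ominus B(f_A))\cap\Z^n\subseteq lA .
\]
Also $P$ is a convex body, since $A$ spans $\R^n$, so $\rho>0$. Trivially $kA\subseteq kP\cap\Z^n$, so it suffices to cover $kP\cap \Z^n$.

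\textbf{Covering step.} Put $t:=f_A+\sqrt n/2$ and $Q:=(1-\epsilon)(lP-ly)+ly$. The ball $B(ly,l\rho)$ lies in $lP$, and by \eqref{eq:eps} we have $\epsilon\ge t/(l\rho)$. So Lemma~\ref{lem:homothet}, applied to the body $lP$ with centre $ly$ and radius $l\rho$, gives
\[
Q\subseteq lP\ominus B(t).
\]
The set $Q$ is a translate of $(1-\epsilon)lP$, and $kP=\lambda\cdot(1-\epsilon)lP$ with $\lambda=k/(l(1-\epsilon))\ge1$, because $k\ge l$ and $\epsilon\in[0,1)$. So Theorem~\ref{thm:RZ}, together with translation invariance (Remark~\ref{rmk}(1)), gives a finite $M\subseteq\R^n$ with
\[
kP\subseteq M+Q, \qquad |M|\le C(n,\lambda)\theta(P)\le C(n,\lambda)\vartheta(n).
\]
Here we use that $\theta$ is invariant under homothety, so $\theta((1-\epsilon)lP)=\theta(P)$. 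Strictly, $N$ is a minimum that might only be approached, but an exact minimiser or the integer-valuedness of $|M|$ settles this. If $P$ is centrally symmetric then so is $Q$, and the bound becomes $(1+\lambda)^n\vartheta(n)$.

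\textbf{Rounding and absorption.} Apply Lemma~\ref{lem:rounding} with $\Gamma=\Z^n$ and $\mu=\sqrt n/2$. It gives $\widetilde M\subseteq\Z^n$ with $|\widetilde M|\le|M|$ and
\[
kP\subseteq \widetilde M+Q+B(\sqrt n/2)\subseteq \widetilde M+\bigl(lP\ominus B(t)\bigr)+B(\sqrt n/2).
\]
By Lemma~\ref{lem:absorb} with $s=t$ and $t'=\sqrt n/2$, this is contained in $\widetilde M+(lP\ominus B(f_A))$. Now take $z\in kA\subseteq\Z^n$ and write $z=\widetilde m+w$ with $\widetilde m\in\widetilde M$ and $w\in lP\ominus B(f_A)$. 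Then $w=z-\widetilde m\in\Z^n$, so Khovanskii's inclusion gives $w\in lA$. Hence $kA\subseteq\widetilde M+lA$.

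\textbf{Main obstacle.} The delicate point is the bookkeeping of the buffers. The shrink factor $\epsilon$ must produce a margin $t$ large enough to absorb both the rounding error $\sqrt n/2$ and Khovanskii's constant $f_A$, and the inradius scales as $l\rho$ in $lP$; this is exactly what \eqref{eq:eps} encodes. A further point to verify is that integrality of $w$ comes for free, which it does because both $z$ and $\widetilde m$ are lattice points.
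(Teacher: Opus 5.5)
Your proposal is correct and follows the paper's proof essentially step for step: you shrink $lP$ about $ly$ via Lemma~\ref{lem:homothet}, cover $kP$ by translates of the shrunk body using Theorem~\ref{thm:RZ}, round the centres into $\Z^n$, absorb the rounding error with Lemma~\ref{lem:absorb}, and finish with Khovanskii's inclusion. Your small additions are all correct and make the argument slightly tidier than the paper's: you check $\langle A-A\rangle_\Z=\Z^n$ and $\lambda\ge 1$, and you note that $Q=(1-\epsilon)lP+\epsilon l y$ is centrally symmetric for any $y$ whenever $P$ is, so the paper's re-choice $y=c$ is not needed.
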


\begin{proof}
Set 
\[P_l^{(\epsilon)}:=(1-\epsilon)(lP- ly)+ly.\] 
Because $A$ generates $\Z^n$ as a lattice, $P$ is not contained in any lower-dimensional hyperplane, so $\rho>0$. The dilated inradius ball satisfies $l B(y,\rho)=B(l y,l\rho)\subseteq lP$. Applying Lemma~\ref{lem:homothet} to the convex body $lP$ with $t=f_A+\sqrt n/2$, and using that
\eqref{eq:eps} is precisely $\epsilon\ge t/(l\rho)$:
\begin{equation}\label{step:1}
  P_l^{(\epsilon)}\;\subseteq\;lP\ominus B\bigl(f_A+\sqrt n/2\bigr).
\end{equation}

We next compare $kP$ with $P_l^{(\epsilon)}$. Expanding the definition of $P_l^{(\epsilon)}$ gives:
\[
  P_l^{(\epsilon)} = (1-\epsilon)lP - (1-\epsilon)ly + ly = (1-\epsilon)lP + \epsilon ly.
\]
Multiplying by $\lambda$, we have:
\[
  \lambda P_l^{(\epsilon)} = \lambda(1-\epsilon)lP + \lambda\epsilon ly.
\]
Since $\lambda = \frac{k}{l(1-\epsilon)}$, we have $\lambda(1-\epsilon)l = k$. Substituting this into the first term yields:
\[
  \lambda P_l^{(\epsilon)} = kP + \lambda\epsilon ly.
\]
Thus, $kP$ is exactly a translation of $\lambda P_l^{(\epsilon)}$, specifically $kP = \lambda P_l^{(\epsilon)} - \lambda\epsilon ly$.

By Theorem~\ref{thm:RZ} and the translation-invariance of covering numbers, i.e. $(1)$ of Remark~\ref{rmk}, there exists $M \subseteq \R^n$ with
\[
  |M|\le C(n, \lambda)\,\theta\bigl(P_l^{(\epsilon)}\bigr)\le C(n, \lambda)\vartheta(n),
  \qquad kP\subseteq M+P_l^{(\epsilon)}.
\]
If $P$ is centrally symmetric about $c$, then let $y=c$. If $y_0\in \R^n$ is such that $B(y_0,\rho)\subseteq P$, then by symmetry $B(2c-y_0,\rho)\subseteq P$. By convexity, 
\[\frac{1}{2}B(y_0,\rho)+\frac{1}{2}B(2c-y_0,\rho)=B(c,\rho)\subseteq P.\] 
Hence, we attain the same inradius at $y=c$, and $lP-lc$ is symmetric about $0$, so $(1-\epsilon)(lP-lc)$ is symmetric about $0$ and $P^{(\epsilon)}_l=(1-\epsilon)(lP-lc)+lc$ is symmetric about $lc$. This allows us to apply the symmetric bound of Theorem~\ref{thm:RZ} to give us $|M| \le (1+\lambda)^n \vartheta(n)$.

Now apply Lemma~\ref{lem:rounding} with $\Gamma=\Z^n$, $\mu=\sqrt n/2$,
$K=kP$, $L=P_l^{(\epsilon)}$, producing $\widetilde M\subseteq\Z^n$
with $|\widetilde M|\le|M|$ and
\begin{equation}\label{step:3}
     kP\;\subseteq\;\widetilde M+\bigl(P_l^{(\epsilon)}+B(\sqrt n/2)\bigr).
\end{equation}
Combining (\ref{step:1}) with Lemma~\ref{lem:absorb}
($s=f_A+\sqrt n/2$, $t=\sqrt n/2$, $s-t=f_A$) we have:
\begin{equation*}\label{step:4}
  P_l^{(\epsilon)}+B(\sqrt n/2)
  \;\subseteq\;\bigl(lP\ominus B(f_A+\sqrt n/2)\bigr)+B(\sqrt n/2)
  \;\subseteq\;lP\ominus B(f_A).    
\end{equation*}
Substituting this inclusion into the right-side of~\eqref{step:3}, we get
\[kP\subseteq\widetilde M+\bigl(lP\ominus B(f_A)\bigr).\]
Now let $u\in kA\subseteq kP$. Then $u- \widetilde{m}\in lP\ominus B(f_A)$ for some
$ \widetilde{m}\in\widetilde M$. Since $u,\widetilde{m}\in\Z^n$, we also have
$u- \widetilde{m}\in\Z^n$. Theorem~\ref{thm:structure} (Khovanskii's Theorem)
implies that $u- \widetilde{m}\in lA$, whence $u\in \widetilde{m}+lA$. Since this holds for all $u \in kA$, we conclude $kA \subseteq \widetilde{M} + lA$.
\end{proof}

\begin{proof}[Proof of Theorem~\ref{thm:main}]
    Let $A\subseteq \Z^n$ be a finite non-empty set with affine span of dimension $d$.

    If $d=0$, then $A=\left\{a\right\}$ for some $a\in\Z^n$. For every $h\ge 1$,
    \[
        rhA=\left\{rha\right\}=\left\{(r-1)ha\right\}+hA.
    \]
    Hence $A$ is an asymptotic $(r,1)$-approximate group. We may therefore assume
    that $d\ge 1$.
 
    Fix any $a_0\in A$ and
    replace $A$ by $A-a_0$, which does not change its affine dimension $d$, or the size of the covering
    set, but ensures that $0\in A$. Therefore,
    \[
        \langle A\rangle_{\Z}=\langle A-A\rangle_{\Z}=:\Gamma\subseteq \R^n
    \]
    is a lattice of rank $d$.

    Choose a $\Z$-basis $b_1,\ldots,b_d$ of $\Gamma$ and let
    $\phi:\Z^d\longrightarrow\Gamma\subset\Z^n$ be the $\Z$-linear isomorphism
    $\phi(e_i)=b_i$. Set $A'=\phi^{-1}(A)\subset\Z^d$. Then
    $0\in A'$, $A'$ generates $\Z^d$ as a lattice, and, as $\phi$ is a group
    isomorphism, we have $\phi(hA')=hA$ and $\phi(rhA')=rhA$ for all $h\in\N$.

       Fix $\delta>0$. Since $C(d,\lambda)$ is a strictly increasing continuous function of $\lambda\ge 1$, there exists $r_{\delta}>r$ such that $C(d,r_\delta)=(1+\delta)C(d, r)$. For $d\ge 4$ set
    \[
 r_\delta:=\left( (1+\delta)^{1/(2d)}(1+\sqrt r)-1 \right)^2 > r.
    \]
    Let $\rho'$ be the inradius of $\conv(A')$ and $f_{A'}$ the constant of
    Theorem~\ref{thm:structure}. Choose
    \[
      h_0:=\Big\lceil\frac{f_{A'}+\sqrt d/2}{\rho'\,(1-r/r_\delta)}\Big\rceil.
    \]
    For $h\ge h_0$, set $\epsilon=(f_{A'}+\sqrt d/2)/(h\rho')$, so
    $\epsilon\le 1-r/r_\delta$ and hence $\lambda:=r/(1-\epsilon)\le r_\delta$. Because, $C(d,\lambda)$ is increasing, we have $C(d,\lambda)\le C(d,r_\delta)=(1+\delta)C(d,r)$.
    
    Apply Proposition~\ref{prop:main} to $A'$ (with ambient dimension $d$, $k=rh$ and $l=h$): there is
    $\widetilde M'\subseteq\Z^d$ with $rhA'\subseteq\widetilde M'+hA'$ and
    \[
      |\widetilde M'|\le C(d,\lambda)\vartheta(d)\le (1+\delta)C(d,r)\vartheta(d).
    \]
Let $\widetilde M:=\phi(\widetilde M')\subseteq\Z^n$. It satisfies
    $rhA=\phi(rhA')\subseteq\phi(\widetilde M')+\phi(hA')=\widetilde M+hA$ with
    $|\widetilde M|=|\widetilde M'|$. Thus, $A$ is an asymptotic $(r,\ell)$-approximate group with covering set $\widetilde M$ of size $\ell := |\widetilde M|$. Because $|\widetilde M'| \le (1+\delta)C(d,r)\vartheta(d)$, we conclude that
    \[
      \ell \le (1+\delta)C(d,r)\vartheta(d).
    \]
    The centrally symmetric case follows analogously. Indeed, translating $A$ to
    $A-a_0$ and applying $\phi^{-1}$ preserve central symmetry of the convex hull.
    Thus the homothets remain centrally symmetric, and we may invoke the symmetric
    bound from Theorem~\ref{thm:RZ}. In this case, choose
    \[
        r_\delta^{\mathrm{sym}}
        :=
        (1+\delta)^{1/d}(1+r)-1>r,
    \]
    and repeat the preceding argument with the bound
    \[
        (1+\lambda)^d\vartheta(d)
        \le
        (1+\delta)(1+r)^d\vartheta(d).
    \]
    Hence one may replace $C(d,r)$ by $(1+r)^d$ in the centrally symmetric case.

    The geometric quantities $f_{A'}$ and $\rho'$ enter only into $h_0$, not into
    the final bound for $\ell$. The choice of the basis of $\Gamma$ therefore
    affects only the threshold $h_0$, not the asymptotic covering constant.
\end{proof}

\begin{example}\label{example:sharpness}
We compare a centrally symmetric example with a non-centrally symmetric one in $\Z^2$. Let
\[
A_\square=\{(0,0),(1,0),(0,1),(1,1)\}.
\]
Then
\[
P_\square:=\operatorname{conv}(A_\square)=[0,1]^2
\]
is centrally symmetric, with centre $(1/2,1/2)$. For every $h\ge 1$,
\[
hA_\square=\{0,1,\ldots,h\}^2.
\]
Therefore
\[
rhA_\square=\{0,1,\ldots,rh\}^2.
\]
Define
\[
X^\square=\{(ih,jh):0\le i,j\le r-1\}.
\]
Then
\[
|X^\square|=r^2
\]
and
\[
rhA_\square\subseteq X^\square+hA_\square.
\]
Indeed, the square $\{0,1,\ldots,rh\}^2$ is covered by the $r^2$ translates
\[
(ih,jh)+\{0,1,\ldots,h\}^2,
\qquad 0\le i,j\le r-1.
\]
Moreover, this value is optimal in $r$: if
\[
rhA_\square\subseteq X+hA_\square,
\]
then
\[
|X|
\ge
\frac{|rhA_\square|}{|hA_\square|}
=
\frac{(rh+1)^2}{(h+1)^2}
\longrightarrow r^2
\]
as $h\to\infty$. Hence $A_\square$ is an asymptotic $(r,r^2)\text{-approximate group},$ and the constant $r^2$ is best possible asymptotically. To summarise, for $A_\square$ we have $\theta(\square)=1$, while $\vartheta(2)\le 3/2$, so the true optimal constant $r^2$ is smaller by up to a factor of $3/2$ than the bound $(1+r)^2\vartheta(2)$ produced by Theorem~\ref{thm:main}.

Now compare this with the triangular set
\[
A_\triangle=\{(0,0),(1,0),(0,1)\}.
\]
Here
\[
P_\triangle:=\operatorname{conv}(A_\triangle)
\]
is a right isosceles triangle and is not centrally symmetric. For every $h\ge 1$,
\[
hA_\triangle
=
\{(x,y)\in\N^2:x+y\le h\}.
\]
The exact minimal number of translates of $hA_\triangle$ needed to cover
\[
rhA_\triangle
=
\{(x,y)\in\N^2:x+y\le rh\}
\]
is a more delicate finite covering problem. However, the relevant planar density phenomenon is classical. F\'ary proved that among lattice coverings of the plane by translates of a planar convex domain, the minimum possible covering density is at most $3/2$, and equality occurs precisely for triangles~\cite{fary50}. Thus, for a triangle, the optimal lattice covering density is $3/2$.

In contrast, the square $P_\square$ tiles the plane by translations, so its lattice covering density is $1$. Thus the centrally symmetric square has density $1$, while the triangular non-centrally symmetric example has density $3/2$ in the lattice-covering sense. This gives a concrete geometric reason why the centrally symmetric hypothesis in Theorem~\ref{thm:main} leads to better covering bounds.
\end{example}

\subsection{Finite sets in abelian groups with torsion} 

We shall need the following lemma of Nathanson:

\begin{lemma}[{\cite[Lemma~4]{Nathanson2018}}]\label{lemma:N_torsion}
    Let $G$ be a finite group of order $k$. Then for every integer $r\ge 1 $, every non-empty subset of $G$ is an asymptotic $(r,k)$-approximate group.
\end{lemma}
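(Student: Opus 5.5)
The natural approach is a pigeonhole argument on the sequence of sumsets $hA$, which is where finiteness of $G$ enters. Let $|G|=k$ and let $A\subseteq G$ be non-empty. First I would show that the sizes $|hA|$ are eventually constant. For any fixed $a\in A$ we have $a+hA\subseteq (h+1)A$, so $|hA|\le |(h+1)A|$. Thus $(|hA|)_{h\ge1}$ is a non-decreasing sequence of integers bounded by $k$. It therefore stabilises: there is an $h_0$ such that $|hA|=|h_0A|$ for all $h\ge h_0$.

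Next I would pin down the structure of $hA$ for large $h$. For $h\ge h_0$ we have $a+hA\subseteq (h+1)A$ with equal cardinalities, so
\[
(h+1)A=a+hA
\]
for every $a\in A$. Consequently $A-a$ stabilises $hA$ for each $a\in A$. Fixing $a_0\in A$, the set $hA$ is invariant under translation by the subgroup $H=\langle A-a_0\rangle$. Since $hA\subseteq ha_0+H$, it follows that $hA$ equals the whole coset $ha_0+H$ for all $h\ge h_0$.

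The covering then comes cheaply, and in fact with a single translate:
\[
rhA=rha_0+H=(r-1)ha_0+(ha_0+H)=(r-1)ha_0+hA .
\]
So $X=\{(r-1)ha_0\}$ works. Since $|X|=1\le k$, every non-empty subset is an asymptotic $(r,k)$-approximate group, and even an asymptotic $(r,1)$-approximate group.

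The only delicate step is the one showing $hA$ becomes a full coset. It rests on two facts: equality of cardinalities in a nested pair forces equality of the sets, and closure of $hA$ under addition of the generators $a-a_0$ gives closure under all of $H$. The second fact holds because $G$ is finite, so each $a-a_0$ has finite order and its negatives are positive multiples of it. The rest is bookkeeping.
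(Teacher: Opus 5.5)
Your proof is correct, and it proves more than the statement asks: a single translate suffices, not $k$. The paper gives no proof of this lemma, only the citation to Nathanson. The constant $k=|G|$ there is exactly what the trivial choice $X=G$ gives: since $hA\neq\emptyset$, we have $rhA\subseteq G=G+hA$ for every $h\ge 1$. That route needs no stabilisation, works from $h_0=1$, and holds verbatim in non-abelian finite groups, which the statement read literally allows. The constant $k$ is all the paper needs, since it only uses the lemma in the $d=0$ case of Theorem~\ref{thm:finite-abelian}.

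Your route buys the sharper constant at the price of a stabilisation threshold $h_0$. In fact $h_0\le k$, because once two consecutive sizes $|hA|$ and $|(h+1)A|$ agree they agree forever. Applied directly to $A\subseteq a_0+T$, your argument would improve part (1) of Theorem~\ref{thm:finite-abelian} from $\ell\le|T|$ to $\ell=1$.

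The step you call delicate is not needed. Once $(m+1)A=a_0+mA$ for all $m\ge h_0$, iterating from $m=h$ up to $m=rh-1$ already gives $rhA=(r-1)ha_0+hA$, without identifying $hA$ as a coset of $H$. In multiplicative notation the same two ingredients, $a_0A^m\subseteq A^{m+1}$ and equality of cardinalities, give $A^{rh}=a_0^{(r-1)h}A^h$. So dropping the coset step also removes your use of commutativity (the inclusion $hA\subseteq ha_0+H$), and the $(r,1)$ conclusion then holds in every finite group.
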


\begin{proof}[Proof of Theorem~\ref{thm:finite-abelian}]
Let $\pi:H\to H/T$ be the canonical projection, and let $\Phi:=\psi\circ\pi:H\to\Z^n$. By definition, $\ker \Phi=T$ and $\Phi(A-a_0)=\overline{A}$.

If $d=0$, then $\overline{A}=\{0\}$ and hence $A-a_0\subseteq T$. Since
\[
hA=ha_0+h(A-a_0),
\qquad
rhA=rha_0+rh(A-a_0),
\]
Lemma~\ref{lemma:N_torsion} applied in the finite group $T$ gives $rh(A-a_0)\subseteq X+h(A-a_0)$ with $|X|\le |T|$ for all sufficiently large $h$. Therefore
\[
rhA\subseteq \bigl(X+(r-1)ha_0\bigr)+hA,
\]
so $A$ is an asymptotic $(r,|T|)$-approximate group.

Assume now that $d\ge 1$ and apply Theorem~\ref{thm:main} to $\overline{A}\subseteq\Z^d$. For all sufficiently large $h$, there exists $\overline{X}\subseteq\Z^d$ such that
\[
|\overline{X}|\le (1+\delta)C(d,r)\vartheta(d)
\]
and
\[
rh\overline{A}\subseteq \overline{X}+h\overline{A}.
\]
Choose a set $X_{0}\subseteq H$ containing one representative in $\Phi^{-1}(\overline{x})$ for each $\overline{x}\in\overline{X}$. Then $|X_{0}|=|\overline{X}|$, and pulling back under $\Phi$ gives
\[
rh(A-a_0)\subseteq X_{0}+T+h(A-a_0).
\]
Let $X=X_{0}+T$. Then
\[
|X|\le |X_{0}|\,|T|\le |T|(1+\delta)C(d,r)\vartheta(d)
\]
and
\[
rh(A-a_0)\subseteq X+h(A-a_0).
\]
Using
\[
rhA=rha_0+rh(A-a_0),
\qquad
hA=ha_0+h(A-a_0),
\]
we obtain
\[
rhA\subseteq \bigl(X+(r-1)ha_0\bigr)+hA.
\]
This proves the stated bound. If $\conv(\overline{A})$ is centrally symmetric, the same proof uses the symmetric bound from Theorem~\ref{thm:main}.

Finally, changing $a_0\to a_0'\in A$ translates $\overline A$ by $\psi(\pi(a_0-a_0'))\in\Z^n$; changing $\psi\to\psi'$ replaces $\overline A$ by $g(\overline A)$ for some $g\in GL_n(\Z)$. Translations and linear automorphisms both preserve $\dim\aff(\cdot)$, and preserve central symmetry since if $S=2c-S$ then $g(S)+v=2(g(c)+v)-(g(S)+v)$.
\end{proof}

\section{Semi-linear sets}\label{sec:semilinear}

We state a consequence of Theorem~\ref{thm:main} for semi-linear subsets
of $\Z^n$. The result is stated relative to a fixed semi-linear
presentation, since the bound depends only on the finite set of base points
appearing in that presentation.

Throughout this section, fix such a presentation and put
\[
  F:=\left\{a_1,\ldots,a_s\right\},\qquad
  S:=S_1+\cdots+S_s,\qquad
  b:=a_1+\cdots+a_s.
\]
The set $S$ is an additive submonoid of $\Z^n$.

We shall use the following variant of the finite-set theorem, in which the
two dilation parameters are allowed to differ by a bounded additive error.

\begin{lemma}\label{lem:shifted-finite-covering}
Let $F\subseteq\Z^n$ be finite and non-empty, and put $d_F=\dim\aff(F)$. Fix integers $r\ge 1$ and $s\ge 0$, and let $\delta>0$. If $d_F=0$, then for all $h\ge s$ there is a singleton $Y\subseteq\Z^n$ such that
\[
rhF\subseteq Y+(h-s)F.
\]
If $d_F\ge 1$, then for all sufficiently large $h$ there is a finite set $Y\subseteq\Z^n$ such that
\[
rhF\subseteq Y+(h-s)F
\]
and
\[
|Y|\le (1+\delta)C(d_F,r)\vartheta(d_F).
\]
If $\conv(F)$ is centrally symmetric, then the bound may be replaced by
\[
|Y|\le (1+\delta)(1+r)^{d_F}\vartheta(d_F).
\]
\end{lemma}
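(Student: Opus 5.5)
The plan is to reuse Proposition~\ref{prop:main} directly, with $k=rh$ and $l=h-s$. Nothing in that proposition requires $k/l$ to be a fixed integer; it only asks that $k\ge l\ge 1$ be integers and that $\epsilon$ satisfy \eqref{eq:eps}. The case $d_F=0$ is immediate. Then $F=\{a\}$, so $rhF=\{rha\}=\{(rh-(h-s))a\}+(h-s)F$ whenever $h\ge s$, and we take $Y=\{(rh-h+s)a\}$.

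For $d_F\ge 1$, I would first reduce exactly as in the proof of Theorem~\ref{thm:main}. Translate $F$ by $-a_0$ for some $a_0\in F$. This turns the required inclusion into
\[
rh(F-a_0)\subseteq \bigl(Y-(rh-h+s)a_0\bigr)+(h-s)(F-a_0),
\]
so the cardinality of $Y$ is unchanged. Next, pull back via a $\Z$-isomorphism $\phi:\Z^{d_F}\to\langle F-a_0\rangle_\Z$. This gives a set $F'\subseteq\Z^{d_F}$ with $0\in F'$ and $\langle F'\rangle_\Z=\Z^{d_F}$. It has inradius $\rho'>0$ and a Khovanskii constant $f_{F'}$ from Theorem~\ref{thm:structure}. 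Central symmetry of the convex hull is preserved by both the translation and $\phi$.

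Now apply Proposition~\ref{prop:main} with $k=rh$, $l=h-s$ and $\epsilon=(f_{F'}+\sqrt{d_F}/2)/((h-s)\rho')$. The result is $Y'$ with $rhF'\subseteq Y'+(h-s)F'$ and $|Y'|\le C(d_F,\lambda)\vartheta(d_F)$, where
\[
\lambda=\frac{rh}{(h-s)(1-\epsilon)}.
\]
The only new point, and the step needing care, is to show $\lambda\le r_\delta$ for large $h$, where $r_\delta>r$ is chosen as before so that $C(d_F,r_\delta)=(1+\delta)C(d_F,r)$, or $(1+r_\delta)^{d_F}=(1+\delta)(1+r)^{d_F}$ in the symmetric case. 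This holds because $h/(h-s)\to 1$ and $\epsilon\to 0$ as $h\to\infty$, so $\lambda\to r<r_\delta$. Explicitly, it suffices to take $h$ large enough that $h-s\ge 1$, that $\epsilon<1$, and that $\frac{h}{h-s}\cdot\frac{1}{1-\epsilon}\le r_\delta/r$. Since $h_0$ may depend on $F$ and $s$ but the bound may not, this is allowed. Monotonicity of $C(d_F,\cdot)$ then gives $|Y'|\le(1+\delta)C(d_F,r)\vartheta(d_F)$, and in the symmetric case the analogous bound with $(1+r)^{d_F}$.

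Finally, set $Y=\phi(Y')+(rh-h+s)a_0$ to undo the reduction. Applying $\phi$ gives $rh(F-a_0)\subseteq\phi(Y')+(h-s)(F-a_0)$, and adding back the translates of $a_0$ gives the claimed inclusion. Since $\phi$ is injective, $|Y|=|Y'|$.
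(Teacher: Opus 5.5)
Your proposal is correct and follows the paper's proof. You reduce to $\Z^{d_F}$ by translation and a $\Z$-isomorphism as in Theorem~\ref{thm:main}, apply Proposition~\ref{prop:main} with $k=rh$, $l=h-s$, and use $\lambda=\frac{rh}{(h-s)(1-\epsilon)}\to r<r_\delta$ together with monotonicity of $C(d_F,\cdot)$. You are more explicit than the paper's sketch about the choice of $\epsilon$, the side conditions $h-s\ge 1$ and $\epsilon<1$, and the translation bookkeeping $Y=\phi(Y')+(rh-h+s)a_0$.
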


\begin{proof}
The case $d_F=0$ is immediate: if $F=\{a\}$, then
\[
rhF=\{rha\}=\{(rh-h+s)a\}+(h-s)F.
\]
Assume $d_F\ge 1$. Translate $F$ by some $a_0\in F$ and identify the lattice generated by $F-a_0$ with $\Z^{d_F}$, exactly as in the proof of Theorem~\ref{thm:main}. This does not change cardinalities of covering sets, and it only translates the final covering set.

Choose $r_\delta>r$ so that $C(d_F,r_\delta)\le (1+\delta)C(d_F,r)$; in the centrally symmetric case choose $r_\delta^{\rm sym}>r$ so that $(1+r_\delta^{\rm sym})^{d_F}\le (1+\delta)(1+r)^{d_F}$. In the proof of Theorem~\ref{thm:main}, replace the parameters $k=rh$ and $l=h$ in Proposition~\ref{prop:main} by
\[
k=rh,\qquad l=h-s.
\]
For all sufficiently large $h$, the boundary-loss parameter $\epsilon_h$ can be chosen so that
\[
\frac{rh}{(h-s)(1-\epsilon_h)}\le r_\delta
\]
(and analogously with $r_\delta^{\rm sym}$ in the centrally symmetric case). Proposition~\ref{prop:main} then gives
\[
rhF\subseteq Y+(h-s)F
\]
with the displayed bounds. Translating back to the original affine lattice gives the claimed inclusion in $\Z^n$.
\end{proof}

\begin{proof}[Proof of Theorem~\ref{thm:semilinear}]
Let
\[
  S:=S_1+\cdots+S_s,
  \qquad
  b:=a_1+\cdots+a_s.
\]

Since $A\subseteq F+S$ and $S$ is an additive monoid, for every
$h\ge 1$,
\begin{equation}\label{eq:semilinear-upper}
  rhA\subseteq rh(F+S)\subseteq rhF+S.
\end{equation}
Indeed, every positive sumset of $S$ is contained in $S$.

We next show that, for every $h\ge s$,
\begin{equation}\label{eq:semilinear-lower}
  b+(h-s)F+S\subseteq hA.
\end{equation}
Let $x\in b+(h-s)F+S$. Then
\[
  x=b+a_{i_1}+\cdots+a_{i_{h-s}}+p,
\]
where $a_{i_j}\in F$ and $p\in S$. Since $p\in S_1+\cdots+S_s$, we may
write
\[
  p=p_1+\cdots+p_s,
  \qquad p_i\in S_i.
\]
Thus
\[
  x=(a_1+p_1)+\cdots+(a_s+p_s)
    +(a_{i_1}+0)+\cdots+(a_{i_{h-s}}+0).
\]
Each summand belongs to $A$, and the number of summands is $h$. Hence
$x\in hA$, proving \eqref{eq:semilinear-lower}.

By Lemma~\ref{lem:shifted-finite-covering}, for all sufficiently large $h$ there is a finite set
$Y\subseteq\Z^n$ such that
\[
  rhF\subseteq Y+(h-s)F,
\]
and, if $d_F=0$, one may take $|Y|=1$, while if $d_F\ge 1$ then
\[
  |Y|\le (1+\delta)C(d_F,r)\vartheta(d_F),
\]
with the stated improvement when $\operatorname{conv}(F)$ is centrally symmetric.

Combining this inclusion with \eqref{eq:semilinear-upper}, we obtain
\[
  rhA
  \subseteq rhF+S
  \subseteq Y+(h-s)F+S.
\]
By \eqref{eq:semilinear-lower},
\[
  (h-s)F+S\subseteq -b+hA.
\]
Therefore
\[
  rhA\subseteq (Y-b)+hA.
\]
Setting \[ X:=Y-b, \] we have \[ rhA\subseteq X+hA. \] Moreover $|X|=|Y|$. Hence, if $d_F=0$, we may take $|X|=1$. If $d_F\ge 1$, then \[ |X|\le (1+\delta)C(d_F,r)\vartheta(d_F), \] with the stated improvement when $\conv(F)$ is centrally symmetric. This proves the theorem.
\end{proof}

\begin{remark}
The bound in Theorem~\ref{thm:semilinear} depends on the chosen
semi-linear presentation of $A$. Thus two different presentations of the
same set may yield different finite base sets $F$, and hence different
values of $d_F$. The theorem applies to every fixed presentation.
\end{remark}

\begin{example}
The affine dimension $d_F$ in Theorem~\ref{thm:semilinear} is attached to the chosen semilinear presentation, not to the underlying set alone.

For example, let
\[
A=\N\subseteq\Z.
\]
It has the one-component presentation
\[
A=0+\N,
\]
so the associated base set is
\[
F=\{0\},
\]
and hence $d_F=0$.

However, the same set also has the presentation
\[
A=(0+\N)\cup(1+\N),
\]
because $1+\N\subseteq\N$. For this presentation the base set is
\[
F'=\{0,1\},
\]
so
\[
d_{F'}=\dim\operatorname{aff}\{0,1\}=1.
\]
Thus two presentations of the same semilinear set can give different affine dimensions. In applications one may, of course, try to choose a presentation for which $d_F$ is as small as possible.
\end{example}

Analogous to the finite case, we deduce the following,

\begin{corollary}\label{cor:semilinear-abelian}
Let $G$ be an abelian group, and let $A=\bigcup_{i=1}^s(a_i+S_i)$ be a semi-linear subset of $G$, where each $S_i$ is a finitely generated additive submonoid of $G$. Fix this presentation. Let $H$ be the subgroup generated by $a_1,\ldots,a_s$ and by finite generating sets for $S_1,\ldots,S_s$, let $T:=H_{\mathrm{tors}}$, and let $\overline F$ be the image of $F:=\{a_1,\ldots,a_s\}$ in $H/T$. After choosing a $\Z$-basis of $H/T$, put $d_F:=\dim\aff(\overline F)$. Then, for every integer $r\ge 1$ and every $\delta>0$, $A$ is an asymptotic $(r,\ell)$-approximate group where, 
\begin{enumerate}
    \item If $d_F=0$, one may take $\ell\le |T|$.
    \item  If $d_F\ge 1$, one may take $\ell\le  |T|(1+\delta)C(d_F,r)\vartheta(d_F)$.
    \item  If $\conv(\overline F)$ is centrally symmetric, then $C(d_F,r)$ may be replaced by $(1+r)^{d_F}$.
\end{enumerate}  
\end{corollary}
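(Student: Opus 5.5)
The plan is to combine the semi-linear argument of Theorem~\ref{thm:semilinear} with the torsion-pullback argument of Theorem~\ref{thm:finite-abelian}. Let $\pi:H\to H/T$ be the projection, fix an isomorphism $\psi:H/T\to\Z^n$, and set $\Phi:=\psi\circ\pi$, so that $\ker\Phi=T$. Since the $a_i$ and the generators of the $S_i$ lie in $H$, we have $A\subseteq H$. Moreover, $\Phi(S_i)=\overline S_i$ is a finitely generated submonoid of $\Z^n$, and
\[
\overline A:=\Phi(A)=\bigcup_{i=1}^s(\overline a_i+\overline S_i)
\]
is a semi-linear subset of $\Z^n$ with base set $\overline F$. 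By the final paragraph of the proof of Theorem~\ref{thm:finite-abelian}, neither $d_F$ nor the central symmetry of $\conv(\overline F)$ depends on the choice of basis.

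I will not apply Theorem~\ref{thm:semilinear} to $\overline A$ and pull back, because that would lose control of the torsion part of $S$. Instead I will rerun its proof directly in $G$. Put $S:=S_1+\cdots+S_s\subseteq H$ and $b:=a_1+\cdots+a_s$. The two inclusions
\[
rhA\subseteq rhF+S
\qquad\text{and}\qquad
b+(h-s)F+S\subseteq hA \quad (h\ge s)
\]
are purely additive, so their proofs carry over verbatim to any abelian group. It therefore suffices to find, for all large $h$, a set $Y\subseteq G$ with
\[
rhF\subseteq Y+(h-s)F
\]
and $|Y|$ bounded as claimed. Then $rhA\subseteq (Y-b)+hA$ follows exactly as before.

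To produce $Y$, I will treat the finite set $F$ in the manner of Theorem~\ref{thm:finite-abelian}. The cleanest route is to work inside $H':=\langle F-a_1\rangle\le H$, with torsion subgroup $T':=H'_{\mathrm{tors}}\le T$. Because $H'\le H$, the group $H'/T'$ embeds in $H/T$, so the affine dimension of the image of $F$ is still $d_F$.

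If $d_F\ge1$, I apply Lemma~\ref{lem:shifted-finite-covering} to the image $\overline F$. This gives, for all large $h$, a set $\overline Y$ with
\[
rh\overline F\subseteq \overline Y+(h-s)\overline F
\]
and the stated bound on $|\overline Y|$. I then choose preimages $Y_0$ of the points of $\overline Y$ and set $Y:=Y_0+T$ after the usual translation by multiples of $a_1$. Pulling back under $\Phi$, exactly as in the proof of Theorem~\ref{thm:finite-abelian}, yields
\[
rhF\subseteq Y+(h-s)F
\qquad\text{with}\qquad |Y|\le|T||\overline Y|.
\]
The centrally symmetric variant follows from the symmetric clause of Lemma~\ref{lem:shifted-finite-covering}.

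The case $d_F=0$, which I expect to be the main obstacle, needs care because of the shift $h-s$. Here $F-a_1\subseteq T$. Writing
\[
rhF=rha_1+rh(F-a_1)
\qquad\text{and}\qquad
(h-s)F=(h-s)a_1+(h-s)(F-a_1),
\]
it suffices to cover $rh(F-a_1)$ by at most $|T|$ translates of $(h-s)(F-a_1)$ inside the finite group $T$. This holds trivially, since any non-empty subset of $T$ can be translated to cover any given element, and there are at most $|T|$ elements to cover. This argument needs no asymptotics, so it sidesteps any question about whether Lemma~\ref{lemma:N_torsion} applies with unequal dilations. The same trivial fact also guarantees that the pullback step in the case $d_F\ge1$ is valid for the shifted dilation $(h-s)$, since preimages of a covering in $\Z^n$ differ from true points only by elements of $T$, which $Y_0+T$ absorbs.
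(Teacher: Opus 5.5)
Your proof is correct. The paper gives no separate argument; it only says ``analogous to the finite case''. The intended route is presumably to apply Theorem~\ref{thm:semilinear} to $\overline A=\Phi(A)\subseteq\Z^n$ and then pull back as in Theorem~\ref{thm:finite-abelian}.

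Your stated reason for avoiding that route, that it would lose control of the torsion in $S$, is mistaken. Since $\Phi$ is a homomorphism, $\Phi(hA)=h\overline A$ and $\Phi(rhA)=rh\overline A$. So from $rh\overline A\subseteq \overline X+h\overline A$, each $x\in rhA$ has some $z\in hA$ and $x_0\in X_0$ with $x-x_0-z\in\ker\Phi=T$. That gives $rhA\subseteq (X_0+T)+hA$, and any torsion in $S$ is simply absorbed by $T$. This route also handles $d_F=0$ at once, since then $|\overline X|=1$ and so $|X|\le|T|$.

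Your order of operations is genuinely different: you first reduce, inside $G$, to $rhF\subseteq Y+(h-s)F$, and only then pull back $F$. What this buys is a sharper torsion factor. Only the torsion of $\langle F-a_1\rangle$ is ever used, so you may take $\ell\le |T'|(1+\delta)C(d_F,r)\vartheta(d_F)$ with $T'=\langle F-a_1\rangle_{\mathrm{tors}}$, and $T'$ can be much smaller than $T$. For example, take $G=\Z\times\Z/m$, $F=\{(0,0),(1,0)\}$ and $S_1=S_2=\{0\}\times\Z/m$; then $T'$ is trivial but $|T|=m$.

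You set up $H'$ and $T'$ for exactly this purpose, but then write $Y=Y_0+T$ and pull back through $\Phi$ on $H$. Either version is valid. As written, though, the detour through $H'$ and the translation by multiples of $a_1$ are unnecessary, because Lemma~\ref{lem:shifted-finite-covering} applies directly to $\Phi(F)\subseteq\Z^n$.
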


\section*{Acknowledgements}
This research was supported through the programme ``Research in Pairs'' by the Mathematisches Forschungsinstitut Oberwolfach in 2022. P.Y. was supported by Charles University programme PRIMUS/24/SCI/010 and Czech Science Foundation, grant number 26-20514S.


\end{document}